%%
%% Copyright 2007, 2008, 2009 Elsevier Ltd
%%
%% This file is part of the 'Elsarticle Bundle'.
%% ---------------------------------------------
%%
%% It may be distributed under the conditions of the LaTeX Project Public
%% License, either version 1.2 of this license or (at your option) any
%% later version.  The latest version of this license is in
%%    http://www.latex-project.org/lppl.txt
%% and version 1.2 or later is part of all distributions of LaTeX
%% version 1999/12/01 or later.
%%
%% The list of all files belonging to the 'Elsarticle Bundle' is
%% given in the file `manifest.txt'.
%%

%% Template article for Elsevier's document class `elsarticle'
%% with numbered style bibliographic references
%% SP 2008/03/01

\documentclass[preprint,12pt]{elsarticle}
\usepackage{amsmath,amssymb,amsfonts,amsbsy}
\usepackage{tikz-cd}
\usepackage{amscd}
\usepackage{wasysym}
\usetikzlibrary{matrix,arrows,decorations.pathmorphing}
\usepackage{graphicx}
\usepackage{float}
\usepackage{setspace}
\usepackage[margin=1.0in]{geometry}

%\newcommand{\thmref}[1]{Theorem~\ref{#1}}
%\newcommand{\lemref}[1]{Lemma~\ref{#1}}
%\newcommand{\corref}[1]{Corollary~\ref{#1}}
%\newcommand{\propref}[1]{Proposition~\ref{#1}}

%Just some short-form theorem environments.
\newtheorem{thm}{Theorem}[section]

\newtheorem{cor}[thm]{Corollary}
\newtheorem{prop}[thm]{Proposition}
\newproof{pf}{Proof}

\newtheorem{remark}[thm]{Remark}

\newcommand{\mc}[1]{\mathcal{#1}}
\newcommand{\e}[1]{\emph{#1}}

\newcommand{\la}{\langle}
\newcommand{\ra}{\rangle}

%lots of quick commands
%\newcommand{\pf}{\noindent {\sc Proof.}\ }
%\newcommand{\Qed}{\ \ \ \rule{7pt}{8pt}\medskip}
\newcommand{\rmv}[1]{}

\newcommand{\LG}{VN(G)}

\newcommand{\LO}{L^1(G)}

\newcommand{\LT}{L^2(G)}

\newcommand{\LI}{L^{\infty}(G)}

\newcommand{\BH}{\mc{B}(H)}

\newcommand{\BLT}{\mc{B}(L^2(G))}

\newcommand{\TC}{\mc{T}(L^2(G))}

\newcommand{\vphi}{\varphi}

\newcommand{\lm}{\lambda}

\newcommand{\Gam}{\Gamma}
\newcommand{\om}{\omega}

\newcommand{\ten}{\otimes}
\newcommand{\oten}{\overline{\otimes}}
\newcommand{\hten}{\widehat{\otimes}}

\newcommand{\id}{\textnormal{id}}
\newcommand{\h}[1]{\widehat{#1}}

\providecommand{\norm}[1]{\lVert#1\rVert}

\newcommand{\F}{\mathbb{F}}

\newcommand{\C}{\mathbb{C}}

\newcommand{\R}{\mathbb{R}}

%% Use the option review to obtain double line spacing
%% \documentclass[authoryear,preprint,review,12pt]{elsarticle}

%% Use the options 1p,twocolumn; 3p; 3p,twocolumn; 5p; or 5p,twocolumn
%% for a journal layout:
%% \documentclass[final,1p,times]{elsarticle}
%% \documentclass[final,1p,times,twocolumn]{elsarticle}
%% \documentclass[final,3p,times]{elsarticle}
%% \documentclass[final,3p,times,twocolumn]{elsarticle}
%% \documentclass[final,5p,times]{elsarticle}
%% \documentclass[final,5p,times,twocolumn]{elsarticle}

%% For including figures, graphicx.sty has been loaded in
%% elsarticle.cls. If you prefer to use the old commands
%% please give \usepackage{epsfig}

%% The amssymb package provides various useful mathematical symbols
%\usepackage{amssymb}
%% The amsthm package provides extended theorem environments
%% \usepackage{amsthm}

%% The lineno packages adds line numbers. Start line numbering with
%% \begin{linenumbers}, end it with \end{linenumbers}. Or switch it on
%% for the whole article with \linenumbers.
%% \usepackage{lineno}

\journal{Journal of Functional Analysis}

\begin{document}

\begin{frontmatter}

%% Title, authors and addresses

%% use the tnoteref command within \title for footnotes;
%% use the tnotetext command for theassociated footnote;
%% use the fnref command within \author or \address for footnotes;
%% use the fntext command for theassociated footnote;
%% use the corref command within \author for corresponding author footnotes;
%% use the cortext command for theassociated footnote;
%% use the ead command for the email address,
%% and the form \ead[url] for the home page:
%% \title{Title\tnoteref{label1}}
%% \tnotetext[label1]{}
%% \author{Name\corref{cor1}\fnref{label2}}
%% \ead{email address}
%% \ead[url]{home page}
%% \fntext[label2]{}
%% \cortext[cor1]{}
%% \address{Address\fnref{label3}}
%% \fntext[label3]{}

\title{On the operator homology of the Fourier algebra and its $cb$-multiplier completion}
\author[JC1]{Jason Crann}\ead{jcrann@math.carleton.ca}
\author[ZT]{Zsolt Tanko}\ead{ztanko@uwaterloo.ca}

%\email{jcrann@uwaterloo.ca, tanko@ualberta.ca}
\address[JC1]{School of Mathematics and Statistics, Carleton University, Ottawa, ON, Canada H1S 5B6}
%\address[JC2]{Institute for Quantum Computing, University of Waterloo, Waterloo, ON, Canada N2L 3G1}
%\address[JC3]{Department of Mathematics and Statistics, University of Guelph, Guelph, ON, Canada N1G 2W1}
\address[ZT]{Department of Pure Mathematics, University of Waterloo, Waterloo, ON, Canada N2L 3G1}

%% use optional labels to link authors explicitly to addresses:
%% \author[label1,label2]{}
%% \address[label1]{}
%% \address[label2]{}

\begin{abstract}
%% Text of abstract
We study various operator homological properties of the Fourier algebra $A(G)$ of a locally compact group $G$. Establishing the converse of two results of Ruan and Xu \cite{RX}, we show that $A(G)$ is relatively operator 1-projective if and only if $G$ is IN, and that $A(G)$ is relatively operator 1-flat if and only if $G$ is inner amenable. We also exhibit the first known class of groups for which $A(G)$ is not relatively operator $C$-flat for any $C\geq1$. As applications of our techniques, we establish a hereditary property of inner amenability, answer an open question of Lau and Paterson \cite{LP1}, and answer an open question of Anantharaman--Delaroche \cite{Claire} on the equivalence of inner amenability and Property (W). In the bimodule setting, we show that relative operator 1-biflatness of $A(G)$ is equivalent to the existence of a contractive approximate indicator for the diagonal $G_\Delta$ in the Fourier--Stieltjes algebra $B(G\times G)$, thereby establishing the converse to a result of Aristov, Runde, and Spronk \cite{ARS}. We conjecture that relative $1$-biflatness of $A(G)$ is equivalent to the existence of a quasi-central bounded approximate identity in $\LO$, that is, $G$ is QSIN, and verify the conjecture in many special cases. We finish with an application to the operator homology of $A_{cb}(G)$, giving examples of weakly amenable groups for which $A_{cb}(G)$ is not operator amenable.
\end{abstract}

\begin{keyword}
%% keywords here, in the form: keyword \sep keyword
Operator homology \sep Fourier algebra \sep Group von Neumann algebra.

\MSC[2010] 46L07 \sep 46H25 \sep 46M10 \sep 46M18 \sep 43A15
%% PACS codes here, in the form: \PACS code \sep code

%% MSC codes here, in the form: \MSC code \sep code
%% or \MSC[2008] code \sep code (2000 is the default)

\end{keyword}

\end{frontmatter}

%% \linenumbers

%% main text
\section{Introduction}

The operator homology\let\thefootnote\relax\footnotetext{2010 \e{Mathematics Subject Classification:} Primary 46L07 46H25 46M10, Secondary 46M18 43A15.} of the Fourier algebra $A(G)$ of a locally compact group $G$ has been a topic of interest in abstract harmonic analysis since Ruan's seminal work \cite{Ru3}, where, among other things, he established the equivalence of amenability of $G$ and operator amenability of $A(G)$. From the perspective of Pontryagin duality, this result is the dual analogue of Johnson's celebrated equivalence of amenability of $G$ and (operator) amenability of $\LO$ \cite{John}. In much the same spirit, dual analogues of various homological properties of $\LO$ were established within the category of operator $A(G)$-modules, including the operator weak amenability of $A(G)$ \cite{Sp}, and the equivalence of discreteness of $G$ and relative operator biprojectivity of $A(G)$ \cite{Ar2,Wo}.

Continuing in this spirit, Ruan and Xu (implicity) showed that $A(G)$ is relatively operator 1-projective whenever $G$ is an IN group (see also \cite{FLS}), and that $A(G)$ is relatively operator 1-flat whenever $G$ is inner amenable \cite{RX}. In this paper, we establish the converse of both of these results, and exhibit the first known class of groups -- including every connected non-amenable group -- for which $A(G)$ is not relatively operator $C$-flat for any $C\geq1$. Along the way, we show that inner amenability passes to closed subgroups, answer an open question of Lau and Paterson \cite{LP1}, and answer an open question of Anantharaman--Delaroche \cite[Problem 9.1]{Claire} on the equivalence of inner amenability and Property (W).

The relative operator biflatness of $A(G)$ has been studied by Ruan and Xu \cite{RX} and Aristov, Runde, and Spronk \cite{ARS}, where it was shown (by different methods) that $A(G)$ is relatively operator biflat whenever $G$ is QSIN, meaning $\LO$ has a quasi-central bounded approximate identity (see \cite{ARS,LR,St2}). The approach of Aristov, Runde, and Spronk is via approximate indicators, where they show that $A(G)$ is relatively operator $C$-biflat whenever the diagonal subgroup $G_\Delta\leq G\times G$ has a bounded approximate indicator in $B(G\times G)$ of norm at most $C$. One of the main results of this paper establishes the converse when $C=1$, that is, $A(G)$ is relatively operator 1-biflat if and only if $G_\Delta$ has a contractive approximate indicator in $B(G\times G)$. Recalling that $A(G)$ is operator amenable precisely when $A(G\times G)$ has a bounded approximate diagonal \cite{Ru3}, we see that $A(G)$ is relatively operator 1-biflat precisely when $A(G\times G)$ has a contractive approximate diagonal in the \e{Fourier--Stieltjes algebra} $B(G\times G)$, a result which elucidates the relationship between operator amenability and relative operator biflatness for $A(G)$, and for completely contractive Banach algebras more generally.

We conjecture that relative operator $1$-biflatness of $A(G)$ is equivalent to the QSIN condition, and we verify the conjecture in many special cases. For a discrete group $H$ acting ergodically by automorphisms on a compact group $K$, we also establish a connection between relative operator biflatness of $A(K\rtimes H)$ and the existence of $H$-invariant means on $L^{\infty}(K)$ distinct from the Haar integral.

Combining results of Leptin \cite{Lep} and Ruan \cite{Ru3}, we see that $A(G)$ has a bounded approximate identity precisely when it is operator amenable. It is known that $G$ is weakly amenable if and only if the algebra $A_{cb}(G)$ has a bounded approximate identity \cite{Fo}, and it was suggested in \cite{FRS} that $A_{cb}(G)$ may be operator amenable exactly when $G$ is weakly amenable. We finish the paper by providing a large family of counter-examples, which includes every weakly amenable, non-amenable, almost connected group.

%show that weak amenability of $G$ and the existence of a bounded approximate indicator for $G_\Delta$ in $B(G\times G)$ implies the operator amenability of $\Acb$, and conversely, operator amenability of $\Acb$ entails the weak amenability of $G$ and the existence of a bounded approximate indicator for $G_\Delta$ in $A_{cb}(G\times G)$. As a corollary, we show that for QSIN groups, $\Acb$ is operator amenable if and only if $G$ is weakly amenable, subsuming the main result of \cite{FRS}. Moreover, we provide several examples of weakly amenable groups for which $\Acb$ fails to be operator amenable.

\section{Preliminaries}

Let $\mc{A}$ be a completely contractive Banach algebra. We say that an operator space $X$ is a right \e{operator $\mc{A}$-module} if it is a right Banach $\mc{A}$-module such that the module map $m_X:X\hten\mc{A}\rightarrow X$ is completely contractive, where $\hten$ denotes the operator space projective tensor product. We say that $X$ is \e{faithful} if for every non-zero $x\in X$, there is $a\in\mc{A}$ such that $x\cdot a\neq 0$, and we say that $X$ is \e{essential} if $\la X\cdot\mc{A}\ra=X$, where $\la\cdot\ra$ denotes the closed linear span. We denote by $\mathbf{mod}-\mc{A}$ the category of right operator $\mc{A}$-modules with morphisms given by completely bounded module homomorphisms. Left operator $\mc{A}$-modules and operator $\mc{A}$-bimodules are defined similarly, and we denote the respective categories by $\mc{A}-\mathbf{mod}$ and $\mc{A}-\mathbf{mod}-\mc{A}$.

%If $X\in\mathbf{mod}-\mc{A}$ is a dual operator space such that the action of $a$ is weak* continuous for all $a\in\mc{A}$, then we say that $X$ is a \e{dual right operator $\mc{A}$-module}. We let $\mathbf{nmod}-\mc{A}$ denote the category of dual right operator $\mc{A}$-modules with morphisms given by weak*-weak* continuous completely bounded module homomorphisms, and similarly for dual left operator $\mc{A}$-modules.

\begin{remark} Regarding terminology, in what follows we will often omit the term ``operator'' when discussing homological properties of operator modules as we will be working exclusively in the operator space category.
\end{remark}

Let $\mc{A}$ be a completely contractive Banach algebra, $X$ in $\mathbf{mod}-\mc{A}$ and $Y$ in $\mc{A}-\mathbf{mod}$. The \e{$\mc{A}$-module tensor product} of $X$ and $Y$ is the quotient space $X\hten_{\mc{A}}Y:=X\hten Y/N$, where
$$N=\la x\cdot a\ten y-x\ten a\cdot y\mid x\in X, \ y\in Y, \ a\in\mc{A}\ra,$$
and, again, $\la\cdot\ra$ denotes the closed linear span. It follows that
$$\mc{CB}_{\mc{A}}(X,Y^*)\cong N^{\perp}\cong(X\hten_{\mc{A}} Y)^*,$$
where $\mc{CB}_{\mc{A}}(X,Y^*)$ denotes the space of completely bounded right $\mc{A}$-module maps $\Phi:X\rightarrow Y^*$.
If $Y=\mc{A}$, then clearly $N\subseteq\mathrm{Ker}(m_X)$ where $m_X:X\hten\mc{A}\rightarrow X$ is the multiplication map. If the induced mapping $\widetilde{m}_X:X\hten_{\mc{A}}\mc{A}\rightarrow X$ is a completely isometric isomorphism we say that $X$ is an \e{induced $\mc{A}$-module}. A similar definition applies for left modules. In particular, we say that $\mc{A}$ is \e{self-induced} if $\widetilde{m}_\mc{A}:\mc{A}\hten_{\mc{A}}\mc{A}\cong\mc{A}$ completely isometrically.

Let $\mc{A}$ be a completely contractive Banach algebra and $X$ in $\mathbf{mod}-\mc{A}$. The identification $\mc{A}^+=\mc{A}\oplus_1\C$ turns the unitization of $\mc{A}$ into a unital completely contractive Banach algebra, and it follows that $X$ becomes a right operator $\mc{A}^+$-module via the extended action
\begin{equation*}x\cdot(a+\lm e)=x\cdot a+\lm x, \ \ \ a\in\mc{A}^+, \ \lm\in\C, \ x\in X.\end{equation*}
Let $C\geq1$. We say that $X$ is \e{relatively $C$-projective} if there exists a morphism $\Phi^+:X\rightarrow X\hten\mc{A}^+$ satisfying $\norm{\Phi^+}_{cb}\leq C$ which is a right inverse to the extended module map $m_X^+:X\hten\mc{A}^+\rightarrow X$. When $X$ is essential, this is equivalent to the existence of a morphism $\Phi:X\rightarrow X\hten\mc{A}$ satisfying $\norm{\Phi}_{cb}\leq C$ and $m_X\circ\Phi=\id_{X}$ by the operator analogue of \cite[Proposition 1.2]{DP}.

%We say that $X$ is \e{$C$-projective} if for every $Y,Z\in\mathbf{mod}-\mc{A}$, every complete quotient morphism $\Psi:Y\twoheadrightarrow Z$, every morphism $\Phi:X\rightarrow Z$, and every $\varepsilon>0$, there exists a morphism $\widetilde{\Phi}_\varepsilon:X\rightarrow Y$ such that $\norm{\widetilde{\Phi}_\varepsilon}_{cb}< C\norm{\Phi}_{cb}+\varepsilon$  and $\Psi\circ\widetilde{\Phi}_\varepsilon=\Phi$, i.e., the following diagram commutes:

%\begin{tikzcd}
%Y \arrow[d, two heads, "\Phi"]\\
%X \arrow[ru, dotted, "\widetilde{\Psi}"] \arrow[r, "\Psi"] &Z
%\end{tikzcd}

%\begin{equation*}
%\begin{tikzcd}
%                     &Y \arrow[d, two heads, "\Psi"]\\
%X \arrow[ru, dotted, "\widetilde{\Phi}_\varepsilon"] \arrow[r, "\Phi"] &Z
%\end{tikzcd}
%\end{equation*}

Given a completely contractive Banach algebra $\mc{A}$ and $X$ in $\mathbf{mod}-\mc{A}$, there is a canonical completely contractive morphism $\Delta_X^+:X\rightarrow\mc{CB}(\mc{A}^+,X)$ given by
\begin{equation*}\Delta_X^+(x)(a)=x\cdot a, \ \ \ x\in X, \ a\in\mc{A}^+,\end{equation*}
where the right $\mc{A}$-module structure on $\mc{CB}(\mc{A}^+,X)$ is defined by
\begin{equation*}(\Psi\cdot a)(b)=\Psi(ab), \ \ \ a\in\mc{A}, \ \Psi\in\mc{CB}(\mc{A}^+,X), \ b\in\mc{A}^+.\end{equation*}
An analogous construction exists for objects in $\mc{A}-\mathbf{mod}$. For $C\geq 1$, we say that $X$ is \e{relatively $C$-injective} if there exists a morphism $\Phi^+:\mc{CB}(\mc{A}^+,X)\rightarrow X$ such that $\Phi^+\circ\Delta_X^+=\id_{X}$ and $\norm{\Phi^+}_{cb}\leq C$. When $X$ is faithful, this is equivalent to the existence of a morphism $\Phi:\mc{CB}(\mc{A},X)\rightarrow X$ such that $\Phi\circ\Delta_X=\id_{X}$ and $\norm{\Phi}_{cb}\leq C$ by the operator analogue of \cite[Proposition 1.7]{DP}, where $\Delta_X(x)(a):=\Delta_X^+(x)(a)$ for $x\in X$ and $a\in\mc{A}$.

We say that $X$ is \e{$C$-injective} if for every $Y,Z$ in $\mathbf{mod}-\mc{A}$, every completely isometric morphism $\Psi:Y\hookrightarrow Z$, and every morphism $\Phi:Y\rightarrow X$, there exists a morphism $\widetilde{\Phi}:Z\rightarrow X$ such that $\norm{\widetilde{\Phi}}_{cb}\leq C\norm{\Phi}_{cb}$ and $\widetilde{\Phi}\circ\Psi=\Phi$.

%We say that $X$ is \e{$C$-injective} if for every $Y,Z\in\mathbf{mod}-\mc{A}$, every completely isometric morphism $\Psi:Y\hookrightarrow Z$, and every morphism %$\Phi:Y\rightarrow X$, there exists a morphism $\widetilde{\Phi}:Z\rightarrow X$ such that $\norm{\widetilde{\Phi}}_{cb}\leq C\norm{\Phi}_{cb}$ and %$\widetilde{\Phi}\circ\Psi=\Phi$, that is, the following diagram commutes:

%\begin{equation*}
%\begin{tikzcd}
%Z \arrow[rd, dotted, "\widetilde{\Phi}"]\\
%Y \arrow[u, hook, "\Psi"] \arrow[r, "\Phi"] &X
%\end{tikzcd}
%\end{equation*}

For a completely contractive Banach algebra $\mc{A}$, we say that $X$ in $\mathbf{mod}-\mc{A}$ is \e{relatively $C$-flat} (respectively, \e{$C$-flat}) if its dual $X^*$ is relatively $C$-injective (respectively, $C$-injective) in $\mc{A}-\mathbf{mod}$ with respect to the canonical module structure given by
$$\la a\cdot f,x\ra = \la f, x\cdot a\ra, \ \ \ f\in X^*, \ x\in X, \ a\in\mc{A}.$$
Similar definitions apply to left operator $\mc{A}$-modules. In the case of operator bimodules, we say that $X$ in $\mc{A}-\mathbf{mod}-\mc{A}$ is \e{relatively $C$-biflat} (respectively, \e{$C$-biflat}) if its dual $X^*$ is relatively $C$-injective (respectively, $C$-injective) in $\mc{A}-\mathbf{mod}-\mc{A}$. Viewing $\mc{A}\hten\mc{A}$ as an operator $\mc{A}$-bimodule via
$$a\cdot(b\ten c)=ab\ten c, \ \ (b\ten c)\cdot a=b\ten ca, \ \ \ a,b,c\in\mc{A},$$
we say that $\mc{A}$ is \e{operator amenable} if it is relatively $C$-biflat in $\mc{A}-\mathbf{mod}-\mc{A}$ for some $C\geq1$, and has a bounded approximate identity. By \cite[Proposition 2.4]{Ru3} this is equivalent to the existence of a bounded approximate diagonal in $\mc{A}\hten\mc{A}$, that is, a bounded net $(A_\alpha)$ in $\mc{A}\hten\mc{A}$ satisfying
$$a\cdot A_\alpha - A_\alpha\cdot a\rightarrow 0, \  \ m_{\mc{A}}(A_\alpha)\cdot a \rightarrow a, \ \ \ a\in\mc{A}.$$

%We let $OA(\mc{A})$ denote the \textit{operator amenability constant of $\mc{A}$}, the infimum of all bounds of approximate diagonals in $\mc{A}\hten\mc{A}$.

For a locally compact group $G$, the left and right regular representations $\lm,\rho:G\rightarrow\BLT$ are given by
$$\lm(s)\xi(t)=\xi(s^{-1}t), \ \ \rho(s)\xi(t)=\xi(ts)\Delta(s)^{1/2}, \ \ \ s,t\in G, \ \xi\in\LT.$$
The von Neumann algebra generated by $\lm(G)$ is called the \e{group von Neumann algebra} of $G$ and is denoted by $VN(G)$. It is known that $\LG$ is \e{standardly represented} on $\LT$ (cf. \cite{H}), so that every normal state $\om\in \LG_*$ is the restriction of a vector state $\om_\xi$ to $\LG$ for a unique unit vector $\xi\in\mc{P}:=\overline{\{\eta\ast J\eta\mid \eta\in C_c(G)\}}$ \cite[Lemma 2.10]{H}, where $C_c(G)$ denotes the continuous functions on $G$ with compact support, and $J$ is the conjugate linear isometry given by
$$J\eta(s)=\overline{\eta(s^{-1})}\Delta(s^{-1})^{1/2}, \ \ \ s\in G, \ \eta\in\LT.$$

The set of coefficient functions of the left regular representation,
\begin{equation*}A(G)=\{u:G\rightarrow\C : u (s)=\la\lm(s)\xi,\eta\ra, \ \xi,\eta\in\LT, \ s\in G\},\end{equation*}
is called the \e{Fourier algebra} of $G$. It was shown by Eymard that, endowed with the norm
$$\norm{u}_{A(G)}=\text{inf}\{\norm{\xi}_{\LT}\norm{\eta}_{\LT} : u(\cdot)=\la\lm(\cdot)\xi,\eta\ra\},$$
$A(G)$ is a Banach algebra under pointwise multiplication \cite[Proposition 3.4]{E}. Furthermore, it is the predual of $\LG$, where the duality is given by
\begin{equation*}\la u,\lm(s)\ra=u(s),\ \ \ u\in A(G), \ s\in G.\end{equation*}
Eymard also showed that the space of functions $\vphi:G\rightarrow\C$ for which there exists a strongly continuous unitary representation $\pi:G\rightarrow\mc{B}(H_\pi)$ and $\xi,\eta\in H_\pi$ such that $\vphi(s)=\la\pi(s)\xi,\eta\ra$, $s\in G$, is a unital Banach algebra (with pointwise multiplication) under the norm
$$\norm{\vphi}_{B(G)}=\text{inf}\{\norm{\xi}_{H_\pi}\norm{\eta}_{H_\pi} : \vphi(\cdot)=\la\pi(\cdot)\xi,\eta\ra\},$$
called the  \e{Fourier-Stieltjes algebra} of $G$ \cite[Proposition 2.16]{E}, denoted by $B(G)$. We denote the convex subset of continuous positive definite functions of norm one by $\mc{P}_1(G)$.

The adjoint of the multiplication $m:A(G)\hten A(G)\rightarrow A(G)$ defines a co-associative co-multiplication $\Gam:VN(G)\rightarrow VN(G\times G)$, where we have used the fact that $VN(G\times G)=VN(G)\oten VN(G)=(A(G)\hten A(G))^*$ \cite[Theorem 7.2.4]{ER}, and $\oten$ denotes the von Neumann algebra tensor product. This co-multiplication is symmetric in the sense that $\Gam=\Sigma\circ\Gam$, where $\Sigma:VN(G\times G)\rightarrow VN(G\times G)$ is the flip map; it satisfies $\Gam(\lm(s))=\lm(s)\ten\lm(s)$, $s\in G$, and can be written as
$$\Gam(x)=V(x\ten 1)V^*, \ \ \ x\in VN(G),$$
where $V$ is the unitary in $\LI\oten VN(G)$ given by
$$V\xi(s,t)=\xi(s,s^{-1}t), \ \ \ s,t\in G, \ \xi\in L^2(G\times G).$$
The co-associativity of $\Gam$ translates into the following \e{pentagonal relation} for $V$:
\begin{equation}\label{e:pentagonal} V_{12}V_{13}V_{23}=V_{23}V_{12},\end{equation}
where $V_{12}=V\ten 1$, $V_{23}=1\ten V$, $V_{13}=(\sigma\ten 1)V_{23}(\sigma\ten 1)$, and $\sigma$ is the flip map on $L^2(G\times G)$.

The group von Neumann algebra $VN(G)$ becomes an operator $A(G)$-bimodule in the canonical fashion, and the bimodule actions can be written in terms of the co-multiplication:
$$u\cdot x = x\cdot u = (\id\ten u)\Gam(x) = (u\ten\id)\Gam(x), \ \ \ u\in A(G), \ x\in VN(G).$$
It follows that $VN(G)$ is faithful as a left/right operator $A(G)$-module (respectively, $A(G)$-bimodule), and that under the isomorphism $\mc{CB}(A(G),VN(G))\cong VN(G\times G)$, the canonical morphism $\Delta_{VN(G)}=\Gam$.

Given a closed subgroup $H\leq G$, we let $I(H)=\{u\in A(G)\mid u|_{H}\equiv 0\}$ denote the closed ideal of functions in $A(G)$ which vanish on $H$. By the proof of \cite[Proposition 1.7]{ARS} $I(H)$ is an essential ideal. It follows from \cite{Herz} that the restriction $r:A(G)\twoheadrightarrow A(H)$ is a complete quotient map with kernel $I(H)$, therefore $A(H)\cong A(G)/I(H)$.

\section{Relative flatness and inner amenability}

If $G$ is a locally compact group and $p\in[1,\infty]$, then $G$ acts by conjugation on $L^p(G)$ via
$$\beta_p(s)f(t)=f(s^{-1}ts)\Delta(s)^{1/p}, \ \ \ s,t\in G, \ f\in L^p(G).$$
When $p=2$, we obtain a strongly continuous unitary representation $\beta_2:G\rightarrow\BLT$ satisfying $\beta_2(s)=\lm(s)\rho(s)$ for $s\in G$, and when $p=\infty$, the conjugation action becomes
$$\beta_\infty(s)f(t)=f(s^{-1}ts), \ \ \ s,t\in G, \ f\in\LI.$$
Following Paterson \cite[2.35.H]{Pat2}, we say that $G$ is \e{inner amenable} if there exists a state $m\in\LI^*$ satisfying
\begin{equation}\label{e:PatIA}\la m,\beta_{\infty}(s)f\ra=\la m,f\ra \ \ \ s\in G, \ f\in\LI.\end{equation}

\begin{remark} In \cite{Effros}, Effros defined a discrete group $G$ to be ``inner amenable'' if there exists a conjugation invariant mean $m\in\ell^{\infty}(G)^*$ such that $m\neq\delta_e$. In what follows, inner amenability will always refer to the definition (\ref{e:PatIA}) given above.\end{remark}

The class of inner amenable locally compact groups forms a large, interesting class of groups containing all amenable groups and IN groups, where a locally compact group $G$ is IN if there exists a compact neighbourhood of the identity which is invariant under conjugation. For example, compact, abelian and discrete groups are IN, and therefore inner amenable.

A strongly continuous unitary representation $\pi:G\rightarrow\mc{B}(H_\pi)$ of a locally compact group $G$ is said to be \e{amenable} if there exists a state $m_\pi\in\mc{B}(H_\pi)^*$ such that
$$\la m_\pi,\pi(s)^*T\pi(s)\ra=\la m_\pi,T\ra, \ \ \ \forall \ s\in G, \ T\in\mc{B}(H_\pi).$$
This concept was introduced by Bekka \cite{Bekka}, who showed, among other things, that $G$ is inner amenable precisely when $\beta_2$ is an amenable unitary representation \cite[Theorem 2.4]{Bekka}. By \cite[Proposition 3.1]{St1}, inner amenability is equivalent to the existence of a $\beta_2$-invariant state on $\beta_2(G)''\subseteq\BLT$, the von Neumann subalgebra generated by the conjugate representation. We now show that inner amenability is equivalent to the existence of a $\beta_2$-invariant state on $\LG$, i.e., a $G$-invariant state under the canonical $G$-action:
$$x\lhd s=\lm(s)^*x\lm(s), \ \ \ x\in\LG, \ s\in G.$$
In turn, we answer a question raised by Lau and Paterson in \cite[Example 5]{LP2}.

\begin{prop}\label{p:IA} A locally compact group $G$ is inner amenable if and only if there exists a $G$-invariant state on $\LG$.\end{prop}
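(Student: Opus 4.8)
The plan is to prove both implications by linking the canonical $G$-action $\lhd$ on $\LG$ with the conjugation representation $\beta_2$, and then invoking Bekka's equivalence between inner amenability of $G$ and amenability of $\beta_2$. The elementary but crucial observation underlying everything is that on $\LG$ the two actions coincide: for $x\in\LG$ and $s\in G$ we have $\beta_2(s)^*x\beta_2(s)=\rho(s)^*\lm(s)^*x\lm(s)\rho(s)$, and since $\lm(s)^*x\lm(s)\in\LG$ commutes with $\rho(s)\in\LG'$, this collapses to $\lm(s)^*x\lm(s)=x\lhd s$.

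For the forward direction I would start from Bekka's theorem: if $G$ is inner amenable then $\beta_2$ is an amenable representation, so there is a state $m_{\beta_2}\in\BLT^*$ with $\langle m_{\beta_2},\beta_2(s)^*T\beta_2(s)\rangle=\langle m_{\beta_2},T\rangle$ for all $T\in\BLT$ and $s\in G$. Restricting $m_{\beta_2}$ to the unital subalgebra $\LG\subseteq\BLT$ gives a state $m$, and the observation above yields $\langle m,x\lhd s\rangle=\langle m_{\beta_2},\beta_2(s)^*x\beta_2(s)\rangle=\langle m_{\beta_2},x\rangle=\langle m,x\rangle$, so $m$ is a $G$-invariant state on $\LG$.

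For the converse --- which I expect to be the main obstacle, since one must pass from the genuinely smaller algebra $\LG$ back up to $\BLT$ --- I would use the standard form of $\LG$ on $\LT$ together with the natural positive cone $\mc{P}$. Given a $G$-invariant state $m$ on $\LG$, choose a net of normal states $\om_{\xi_i}$, with $\xi_i\in\mc{P}$ unit vectors, converging weak$^*$ to $m$; then $s\cdot\om_{\xi_i}-\om_{\xi_i}\to0$ weak$^*$ for each $s$, where $(s\cdot\om)(x):=\om(x\lhd s)$. The decisive point is that $\beta_2(s)$ \emph{preserves} $\mc{P}$: a direct computation with the given $J$ shows $J\lm(s)J=\rho(s)$, whence $\beta_2(s)=\lm(s)\rho(s)=\lm(s)J\lm(s)J$ is exactly a unitary of the form $uJuJ$ with $u\in\LG$, and such unitaries fix the natural cone in Tomita--Takesaki theory. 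Consequently $\beta_2(s)\xi_i\in\mc{P}$ is the unique cone representative of the normal state $s\cdot\om_{\xi_i}$.

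With this in hand I would upgrade weak$^*$ convergence to norm convergence by a convexity argument: for a finite set $F\subseteq G$, the affine map $\om\mapsto(s\cdot\om-\om)_{s\in F}$ carries normal states into $\bigoplus^{1}_{s\in F}\LG_*$ with $0$ in the weak closure of its convex range, so by Hahn--Banach (using $(\LG_*)^*=\LG$, so that weak$^*$ convergence of normal functionals is the Banach-space weak topology) $0$ lies in the norm closure. This produces normal states $\om_\eta$, $\eta\in\mc{P}$, with $\sum_{s\in F}\|s\cdot\om_\eta-\om_\eta\|$ arbitrarily small, and the Powers--St\o rmer inequality $\|\beta_2(s)\eta-\eta\|^2\le\|\om_{\beta_2(s)\eta}-\om_\eta\|=\|s\cdot\om_\eta-\om_\eta\|$ then furnishes a net of unit vectors in $\mc{P}$ almost invariant for $\beta_2$. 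Any weak$^*$ cluster point of the associated vector states on $\BLT$ is a $\beta_2$-invariant state, so $\beta_2$ is amenable and $G$ is inner amenable by Bekka's theorem. The only genuine computations are the identity $J\lm(s)J=\rho(s)$ and the cone/Powers--St\o rmer mechanism; the rest is routine.
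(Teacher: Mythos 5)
Your proposal is correct and follows essentially the same route as the paper's proof: Bekka's theorem together with the commutation $\rho(s)\in\LG'$ for the forward direction, and, for the converse, the standard form with cone representatives, the cone-invariance $\beta_2(s)\mc{P}\subseteq\mc{P}$ via $\beta_2(s)=\lm(s)J\lm(s)J$, a Mazur-type convexity upgrade from weak to norm convergence, and Haagerup's Powers--St\o rmer inequality. The only (harmless) divergence is the endgame: the paper forms the states $|\eta_\gamma|^2\in\LO$ and exhibits a conjugation-invariant mean on $\LI$ directly, verifying Paterson's definition, whereas you cluster the vector states on $\BLT$ to get a $\beta_2$-invariant state and re-invoke the (easy) converse direction of Bekka's equivalence --- both conclusions are routine and valid.
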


\begin{pf} If $G$ is inner amenable, then by \cite[Theorem 2.4]{Bekka} there exists a $\beta_2$-invariant state $m\in\BLT^*$, whose restriction to $\LG$ is necessarily $G$-invariant, as
$$\la m,\lm(s)^*x\lm(s)\ra=\la m,\lm(s)^*\rho(s)^*x\rho(s)\lm(s)\ra=\la m,\beta_2(s)^*x\beta_2(s)\ra=\la m,x\ra$$
for all $x\in VN(G)$, $s\in G.$

Conversely, suppose $m\in \LG^*$ is a $G$-invariant state. Since $\LG$ is standardly represented on $\LT$, there exists a net of unit vectors $(\xi_\alpha)$ in $\mc{P}$ such that $(\om_{\xi_\alpha})$ converges to $m$ in the weak* topology of $\LG^*$. By $G$-invariance, it follows that
\begin{equation*}\beta_2(s)\cdot\om_{\xi_\alpha}\cdot\beta_2(s)^*-\om_{\xi_\alpha}=\om_{\beta_2(s)\xi_\alpha}-\om_{\xi_\alpha}\rightarrow 0\end{equation*}
weakly in $A(G)=\LG_*$ for all $s\in G$. By the standard convexity argument, we obtain a net of unit vectors $(\eta_\gamma)$ in $\mc{P}$ satisfying
\begin{equation*}\norm{\beta_2(s)\cdot\om_{\eta_\gamma}\cdot\beta_2(s)^*-\om_{\eta_\gamma}}_{A(G)}=\norm{\om_{\beta_2(s)\eta_\gamma}-\om_{\eta_\gamma}}_{A(G)}\rightarrow 0, \ \ \ s\in G.\end{equation*}
However, since $\beta_2(s)=\lm(s)\rho(s)=\lm(s)J\lm(s)J$ we have $\beta_2(s)\mc{P}\subseteq\mc{P}$ for any $s\in G$ by \cite[Theorem 1.1]{H}. Then \cite[Lemma 2.10]{H} entails
\begin{equation*}\norm{\beta_2(s)\eta_\gamma-\eta_\gamma}_{\LT}^2\leq\norm{\om_{\beta_2(s)\eta_\gamma}-\om_{\eta_\gamma}}_{A(G)}\rightarrow0, \ \ \ s\in G.\end{equation*}
Letting $f_\gamma:=|\eta_\gamma|^2$, we obtain a net of states in $\LO$ satisfying
$$\norm{\beta_1(s)f_\gamma - f_\gamma}_{\LO}=\norm{\om_{\beta_2(s)\eta_\gamma}-\om_{\eta_\gamma}}_{\LO}\leq 2\norm{\beta_2(s)\eta_\gamma - \eta_\gamma}_{\LT}\rightarrow0, \ \ \ s\in G.$$
Any weak* cluster point $M\in\LI^*$ of $(f_\gamma)$ will therefore be conjugate invariant, and $G$ is inner amenable.
\qed\end{pf}

As an immediate corollary, we obtain the following hereditary property of inner amenability, which appears to be new.

\begin{cor}\label{c:subgroup} Let $G$ be a locally compact group and let $H$ be a closed subgroup of $G$. If $G$ is inner amenable, then $H$ is inner amenable.
\end{cor}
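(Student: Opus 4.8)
The plan is to combine Proposition~\ref{p:IA} with the standard realization of $VN(H)$ as a von Neumann subalgebra of $\LG$. By \propref{p:IA}, inner amenability of $G$ provides a $G$-invariant state $m$ on $\LG$ for the conjugation action $x\lhd s=\lm(s)^*x\lm(s)$; in particular $m$ is invariant under the restricted $H$-action $x\lhd h$, $h\in H\leq G$. The aim is to transport $m$ to an $H$-invariant state on $VN(H)$ and then appeal to \propref{p:IA} in the converse direction.

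First I would use that the restriction $r:A(G)\twoheadrightarrow A(H)$ is a complete quotient homomorphism (as recorded in the preliminaries), so that its adjoint $r^*:VN(H)\rightarrow\LG$ is a weak*-continuous complete isometry; testing against $A(G)$ gives $r^*(\lm_H(h))=\lm(h)$ for $h\in H$. The key structural input is that $r^*$ is moreover a normal, unital $*$-isomorphism of $VN(H)$ onto the von Neumann subalgebra $\{\lm(h):h\in H\}''\subseteq\LG$, which is the classical duality theorem for closed subgroups due to Takesaki and Tatsuuma. Granting this, $r^*$ intertwines the two conjugation actions:
$$r^*(\lm_H(h)^*x\,\lm_H(h))=\lm(h)^*r^*(x)\lm(h)=r^*(x)\lhd h, \ \ \ x\in VN(H), \ h\in H.$$

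I would then set $m_H:=m\circ r^*$. Since $r^*$ is unital and contractive, $m_H$ is a norm-one functional on $VN(H)$ with $\la m_H,1\ra=1$, hence a state. Combining the intertwining identity with the $H$-invariance of $m$ yields
$$\la m_H,\lm_H(h)^*x\,\lm_H(h)\ra=\la m,r^*(x)\lhd h\ra=\la m,r^*(x)\ra=\la m_H,x\ra$$
for all $x\in VN(H)$ and $h\in H$, so $m_H$ is an $H$-invariant state on $VN(H)$. By \propref{p:IA}, $H$ is inner amenable.

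I expect the only substantive point to be the identification of $VN(H)$ with $\{\lm(h):h\in H\}''$ as a normal $*$-isomorphism. The embedding $r^*$ furnished by Herz's restriction theorem is a priori only a weak*-homeomorphic complete isometry, and it is precisely its multiplicativity that produces the $H$-equivariance used above; once this is in hand, the remainder is a short functional-analytic computation. It is worth noting that $G$-invariance of $m$ is never fully used---only its invariance under the subgroup $H$---which is exactly what makes the passage to subgroups transparent.
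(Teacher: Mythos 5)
Your proof is correct and follows essentially the same route as the paper: both identify $VN(H)$ with the von Neumann subalgebra $\{\lm_G(h)\mid h\in H\}''\subseteq VN(G)$ via the canonical $*$-isomorphism sending $\lm_H(h)\mapsto\lm_G(h)$, pull the $G$-invariant state of Proposition~\ref{p:IA} back to an $H$-invariant state on $VN(H)$, and apply Proposition~\ref{p:IA} again. Your additional observations---that this isomorphism is the adjoint of Herz's restriction map and that only invariance under the subgroup $H$ is needed---are accurate refinements of the same argument.
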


\begin{pf} Let $VN_H(G):=\{\lm_G(s)\mid s\in H\}''\subseteq\LG$. Then the map $i_H:VN(H)\rightarrow VN_H(G)$ given by
$$i_H(\lm_H(s))=\lm_G(s), \ \ \ s\in H,$$
is a *-isomorphism of von Neumann algebras. Thus, if $m\in\LG^*$ is a $G$-invariant state then $m_H:=m|_{VN_H(G)}\circ i_H\in VN(H)^*$ is an $H$-invariant state on $VN(H)$, so $H$ is inner amenable by Proposition \ref{p:IA}.
\qed\end{pf}

%Since a locally compact group $G$ is IN precisely when $\LG$ has a normal $G$-invariant state \cite[Proposition 4.2]{Tay}, Proposition \ref{p:IA} %shows that IN is to inner amenability what compactness is to amenability.

In \cite[Corollary 3.2]{LP1}, Lau and Paterson proved the following equivalence for a locally compact group $G$:
\begin{enumerate}
\item $G$ is amenable;
\item $G$ is inner amenable and $\LG$ is $1$-injective in $\C-\mathbf{mod}$.
\end{enumerate}
The following theorem will allow us to describe the above equivalence from a purely homological perspective, elucidating the relationship between amenability and inner amenability.

\begin{thm}\label{t:IA=relinj}   A locally compact group $G$ is inner amenable if and only if $A(G)$ is relatively $1$-flat in $\mathbf{mod}-A(G)$.
\end{thm}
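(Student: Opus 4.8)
The plan is to strip the definitions down to a concrete splitting problem for the comultiplication and then read an invariant state off the splitting map. Since $\LG=A(G)^*$, by definition $A(G)$ is relatively $1$-flat in $\mathbf{mod}-A(G)$ exactly when $\LG$ is relatively $1$-injective as a left operator $A(G)$-module. As $\LG$ is faithful, the operator analogue of \cite[Proposition 1.7]{DP} reduces this to the existence of a completely contractive left $A(G)$-module map $\Phi\colon VN(G\times G)\to\LG$ with $\Phi\circ\Delta_{\LG}=\id_{\LG}$; invoking the identification $\mc{CB}(A(G),\LG)\cong VN(G\times G)$ together with $\Delta_{\LG}=\Gam$ recorded in the preliminaries, the task becomes precisely to produce, respectively extract, a completely contractive left module map $\Phi$ with $\Phi\circ\Gam=\id_{\LG}$. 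Under this identification the $A(G)$-action is carried by one tensor leg (say the first), so that $u\cdot T=(R_u\ten\id)(T)$ with $R_u x=u\cdot x$; since $\Gam$ is symmetric the choice of leg is immaterial.

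For the forward implication (inner amenability $\Rightarrow$ relative $1$-flatness), which is essentially due to Ruan and Xu \cite{RX}, I would take a $\be_2$-invariant state $m$ on $\BLT$, which exists by \cite[Theorem 2.4]{Bekka}, and set $\Phi=(\id\ten m)\circ\mathrm{Ad}_{V^*}$. Since $\mathrm{Ad}_{V^*}$ is a complete isometry and $m$ a state, $\norm{\Phi}_{cb}\leq1$; a direct computation on generators gives $\Phi(\lm(s)\ten\lm(t))=m(\lm(st^{-1}))\lm(s)$, using the identity $\lm(x^{-1}st^{-1}x)=\be_2(x)^*\lm(st^{-1})\be_2(x)$ and $\be_2$-invariance of $m$ to collapse the resulting $\LI$-valued slice to a scalar, whence $\Phi\circ\Gam=\id$ and $\Phi$ is a module map. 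The genuine analytic obstacle here, handled in \cite{RX}, is checking that the range of $\Phi$ lands in $\LG$ on \emph{all} of $VN(G\times G)$ and not merely on the weak*-dense generators, since $m$ need not be normal.

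The main content is the converse. Given a completely contractive left module map $\Phi$ with $\Phi\circ\Gam=\id_{\LG}$, I would first observe that $\Phi$ is unital: as $\Gam(1)=1\ten1$ we get $\Phi(1\ten1)=1$, and a unital complete contraction between C*-algebras is automatically completely positive, so $\Phi$ is UCP. Next I would show $\Phi(1\ten x)\in\C1$ for every $x\in\LG$: the module identity $(R_u\ten\id)(1\ten x)=u(e)(1\ten x)$ forces $u\cdot\Phi(1\ten x)=u(e)\,\Phi(1\ten x)$ for all $u\in A(G)$, and the only $z\in\LG$ with $u\cdot z=u(e)z$ for all $u$ lie in $\C1$, because $u\cdot\lm(r)=u(r)\lm(r)$ and $A(G)$ separates the points of $G$. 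Hence there is a well-defined linear functional $m$ on $\LG$ with $\Phi(1\ten x)=m(x)1$, and it is a state since $\Phi$ is unital and positive.

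Finally I would prove $m$ is invariant for $x\lhd r=\lm(r)^*x\lm(r)$, which by $\propref{p:IA}$ gives inner amenability. The key point is that each diagonal unitary $\Gam(\lm(r))=\lm(r)\ten\lm(r)$ lies in the multiplicative domain of $\Phi$: it is unitary and $\Phi(\Gam(\lm(r)))=\lm(r)$ is unitary (as $\Phi\circ\Gam=\id$), so $\Phi(\Gam(\lm(r))^*\Gam(\lm(r)))=1=\Phi(\Gam(\lm(r)))^*\Phi(\Gam(\lm(r)))$ and likewise on the other side. Using $\Gam(\lm(r))^*(1\ten x)\Gam(\lm(r))=1\ten(x\lhd r)$ and the bimodule property of the multiplicative domain,
\[
m(x\lhd r)1=\Phi(1\ten(x\lhd r))=\lm(r)^*\,\Phi(1\ten x)\,\lm(r)=\lm(r)^*\big(m(x)1\big)\lm(r)=m(x)1,
\]
so $m(x\lhd r)=m(x)$ and $m$ is a $G$-invariant state on $\LG$. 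I expect the true crux of the whole argument to be the range verification in the forward direction (the reason it is attributed to \cite{RX}); the converse, by contrast, is clean once one recognizes that the diagonal unitaries $\Gam(\lm(r))$ automatically sit in the multiplicative domain of the UCP splitting $\Phi$.
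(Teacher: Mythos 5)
Your converse --- which is the direction that is actually new in the paper --- is correct, and it follows the paper's proof in structure: reduce, via faithfulness and the identifications $\mc{CB}(A(G),\LG)\cong VN(G\times G)$, $\Delta_{\LG}=\Gam$, to a completely contractive left $A(G)$-module map $\Phi:VN(G\times G)\rightarrow\LG$ with $\Phi\circ\Gam=\id_{\LG}$; read off a state from $\Phi$ applied to elements with $1$ in one leg, using the module property and the standard argument; then prove conjugation invariance from a rigidity property of $\Phi$ over $\Gam(\LG)$. The only genuine difference is the source of that rigidity: the paper forms the norm-one projection $\Gam\circ\Phi$ onto $\Gam(\LG)$ and invokes Tomiyama's theorem to get the bimodule identity $x\Phi(T)y=\Phi(\Gam(x)T\Gam(y))$, whereas you observe that $\Phi$ is unital (hence UCP) and that each unitary $\Gam(\lm(r))$ lies in its multiplicative domain because its image $\lm(r)$ is unitary. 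For the unitaries $\Gam(\lm(r))$, which is all the invariance argument needs, the two statements coincide; your route is marginally more self-contained (Choi's multiplicative domain lemma instead of Tomiyama), while the paper's Tomiyama identity holds over all of $\Gam(\LG)$ and is reused verbatim in the proof of Theorem \ref{t:biflat}. Either way, the converse stands.

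The forward direction, as you wrote it, has a genuine gap, and it is larger than the ``range verification'' you flag. For a non-normal $\be_2$-invariant state $m$, the map $\Phi=(\id\ten m)\circ\mathrm{Ad}_{V^*}$ does satisfy $\Phi\circ\Gam=\id$ (this much is immediate, since $V^*\Gam(x)V=x\ten1$), but your computation on generators is not valid: for a normal functional $\om$ on the first leg, $(\om\ten\id)\bigl(V^*(\lm(s)\ten\lm(t))V\bigr)$ is a weak* integral over $r\in G$ of the conjugates $\be_2(r)^*\lm(st^{-1})\be_2(r)$ weighted by a scalar density coming from $\om$, and evaluating $m$ on such an integral requires interchanging $m$ with the integral --- exactly what fails when $m$ is not normal (pointwise invariance of a singular state does not give invariance under the integrated action). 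Worse, since $m$ is not normal, $\Phi$ is not weak*-continuous, so neither the module property nor the inclusion $\Phi(VN(G\times G))\subseteq\LG$ can be propagated from the generators $\lm(s)\ten\lm(t)$ to all of $VN(G\times G)$. This is precisely why the paper does not argue from a single invariant state: it first upgrades inner amenability, via \cite[Proposition 1.13]{St2}, to a net of states $f_\alpha\in\LO$ with $\norm{\be_1(s)f_\alpha-f_\alpha}_{\LO}\rightarrow0$ uniformly on compact sets, takes square roots to obtain asymptotically $\be_2$-invariant unit vectors, and only then invokes \cite[Lemma 3.1, Lemma 4.1]{RX}; the associated vector states are normal, and the uniform-on-compacta invariance is what allows the module property and the range condition to survive the point-weak* limit. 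Since this implication is in any case credited to Ruan and Xu, deferring it is reasonable, but the direct construction you sketch cannot serve as a substitute for that reduction.
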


\begin{pf} If $G$ is inner amenable, then by \cite[Proposition 1.13]{St2} there exists a net of states $(f_\alpha)$ in $\LO$ satisfying
$$\norm{\beta_1(s)f_\alpha-f_\alpha}_{\LO}\rightarrow0, \ \ \ s\in G,$$
uniformly on compact sets. The square roots $\xi_\alpha:=f_\alpha^{1/2}\in\LT$ then satisfy
$$\norm{\beta_2(s)\xi_\alpha - \xi_\alpha}^2_{\LT}\leq\norm{\beta_1(s)f_\alpha-f_\alpha}_{\LO}\rightarrow0, \ \ \ s\in G,$$
uniformly on compact sets. Thus, combining \cite[Lemma 3.1, Lemma 4.1]{RX}, it follows that $\Gam:\LG\rightarrow VN(G\times G)$ has a completely contractive left inverse $\Phi$ which is a left $A(G)$-module map. Since $\LG$ is faithful in $A(G)-\mathbf{mod}$, this entails the relative 1-injectivity of $\LG$ in $A(G)-\mathbf{mod}$, and hence, the relative 1-flatness of $A(G)$ in $\mathbf{mod}-A(G)$.

Conversely, relative $1$-flatness of $A(G)$ in $\mathbf{mod}-A(G)$ implies the existence of a completely contractive morphism $\Phi: VN(G\times G)\rightarrow \LG$ satisfying $\Phi\circ\Gam=\id_{\LG}$.
It follows that $\Gam\circ\Phi: VN(G\times G)\rightarrow  VN(G\times G)$ is a projection of norm one onto the image of $\Gam$. Thus, by \cite{To}, $\Gam\circ\Phi$ is a $\Gam(\LG)$-bimodule map, which by injectivity of $\Gam$ yields the identity
\begin{equation}\label{2} x\Phi(T)y=\Phi(\Gam(x)T\Gam(y))\end{equation}
for all $x,y\in\LG$ and $T\in VN(G\times G)$.

For $x\in VN(G)$, the module property of $\Phi$ implies $u\cdot\Phi(x\ten 1)=\Phi(x\ten u\cdot1)=u(e)\Phi(x\ten 1)$ for all $u\in A(G)$. The standard argument then shows $\Phi(x\ten1)\in\C1$, so that $m:\LG\rightarrow\C$ defined by $\la m,x\ra=\Phi(x\ten 1)$, $x\in \LG$, yields a state on $\LG$. Moreover, by equation (\ref{2}) we obtain
\begin{align*}\la m,\lm(s)x\lm(s)^*\ra&=\Phi(\lm(s)x\lm(s)^*\ten 1)=\Phi((\lm(s)\ten\lm(s))(x\ten 1)(\lm(s)^*\ten\lm(s)^*))\\
&=\Phi(\Gam(\lm(s))(x\ten 1)\Gam(\lm(s)^*))=\lm(s)\Phi(x\ten 1)\lm(s)^*=\Phi(x\ten 1)\\
&=\la m,x\ra\end{align*}
for any $x\in \LG$ and $s\in G$. Thus, $m$ is a $G$-invariant state on $\LG$, which by Proposition \ref{p:IA} implies that $G$ is inner amenable.
\qed\end{pf}

Combining Theorem \ref{t:IA=relinj} with \cite[Corollary 5.3]{C}, we can now recast the Lau--Paterson equivalence in purely homological terms:
\begin{enumerate}
\item $\LG$ is $1$-injective in $A(G)-\mathbf{mod}$;
\item $\LG$ is relatively $1$-injective in $A(G)-\mathbf{mod}$ and $1$-injective in $\C-\mathbf{mod}$.
\end{enumerate}

Let $G$ be a locally compact group. A function $u\in B(G\times G)$ is said to be \textit{properly supported}, if for every compact subset $K\subseteq G$, the sets $\mathrm{supp}(u)\cap G\times K$ and $\mathrm{supp}(u)\cap K\times G$ are compact \cite[Definition 4.2]{Claire}. The group $G$ is said to have Property $(W)$ if for every compact set $K\subseteq G$ and every $\varepsilon>0$, there exists a properly supported bounded positive definitive function $u\in B(G\times G)$ such that $|u(s,s)-1|<\varepsilon$ for all $s\in K$ \cite[Definition 4.3]{Claire}. This notion was introduced to study the relationship between amenable actions of locally compact groups and exactness of reduced group $C^*$-algebras. It was shown that every inner amenable group has Property $(W)$ \cite[Proposition 4.6]{Claire}, but it was left open whether they are equivalent \cite[Problem 9.1]{Claire}. We now show that this is indeed the case.

\begin{thm} A locally compact group $G$ is inner amenable if and only if it has Property $(W)$.\end{thm}

\begin{pf} Suppose $G$ has Property $(W)$, witnessed by a net $(u_\alpha)$ of properly supported positive definite functions in $B(G\times G)$ satisfying
$$|u_\alpha(s,s)-1|\rightarrow 0$$
for $s\in G$, uniformly on compact sets. Without loss of generality we may assume that $u_\alpha(e,e)=1$ for all $\alpha$. By Nielson's lemma \cite[Lemma 10.3]{N} (see also \cite[Proposition 5.1]{dH}) it follows that
$$(u_\alpha)|_{G_\Delta}\cdot v\rightarrow v, \ \ \ v\in A(G).$$
Moreover, since $u_\alpha$ is properly supported, for any $v\in A(G)$ with compact support, the function $u_\alpha\cdot(1\ten v)\in B(G\times G)$ is compactly supported, and hence lies in $A(G\times G)$. Thus, $u_\alpha\cdot(1\ten v)\in A(G\times G)$ for all $v\in A(G)$, and
$$\norm{[u_\alpha\cdot(1\ten v_{ij})]}_{M_n(A(G\times G))}=\norm{[u_\alpha\cdot(1\ten v_{ij})]}_{M_n(B(G\times G))}\leq\norm{u_{\alpha}}_{B(G\times G)}\norm{[v_{ij}]}_{M_n(A(G))},$$
so that $\norm{u_\alpha}_{\mc{CB}(A(G),A(G\times G))}\leq\norm{u_\alpha}_{B(G\times G)}=1$. Define maps $\Phi_\alpha:VN(G\times G)\rightarrow VN(G)$ by
$$\la\Phi_\alpha(X),v\ra=\la u_\alpha\cdot(1\ten v),X\ra, \ \ \ X\in VN(G\times G), \ v\in A(G).$$
Then $\norm{\Phi_\alpha}_{cb}\leq\norm{u_\alpha}_{\mc{CB}(A(G),A(G\times G))}\leq 1$, and
$$\la\Phi_\alpha(u\cdot X),v\ra=\la u_\alpha\cdot(1\ten v),u\cdot X\ra=\la u_\alpha\cdot(1\ten vu), X\ra=\la\Phi_\alpha(X),vu\ra=\la u\cdot\Phi_\alpha(X),v\ra$$
for all $X\in VN(G\times G)$ and $u,v\in A(G)$. Passing to a subnet if necessary, we may assume that $(\Phi_\alpha)$ converges weak* to $\Phi\in\mc{CB}(VN(G\times G),VN(G))=(VN(G\times G)\hten A(G))^*$. Then 
$$\la \Phi(\Gamma(x)),v\ra=\lim_\alpha\la u_\alpha\cdot(1\ten v),\Gamma(x)\ra=\lim_\alpha\la (u_\alpha)|_{G_\Delta}\cdot v,x\ra=\lim_\alpha\la v,x\ra=\la x,v\ra$$
for all $x\in VN(G)$ and $v\in A(G)$. Hence, $\Phi:VN(G\times G)\rightarrow VN(G)$ is a completely contractive left $A(G)$-module inverse to $\Gam$, entailing the relative 1-flatness of $A(G)$ in $\mathbf{mod}-A(G)$, and therefore the inner amenability of $G$ by Theorem \ref{t:IA=relinj}   .
\qed\end{pf}

At present, we believe but have been unable to show that inner amenability of $G$ is equivalent to relative $C$-flatness of $A(G)$ in $\mathbf{mod}-A(G)$ for $C>1$. We can, however, provide a number of examples which support the conjecture based on the following proposition.

\begin{prop}\label{p:C-inj} Let $G$ be a locally compact group and let $H$ be a closed subgroup. If $\LG$ is $C$-injective in $A(G)-\mathbf{mod}$ for $C\geq 1$, then $VN(H)$ is $C$-injective in $A(H)-\mathbf{mod}$.
\end{prop}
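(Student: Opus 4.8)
The plan is to transport a lifting problem for $VN(H)$ over $A(H)$ into one for $\LG$ over $A(G)$, solve it using the hypothesis, and then pull the solution back through the $*$-isomorphism $i_H\colon VN(H)\to VN_H(G)$ from the proof of \corref{c:subgroup}. First I would use that the restriction $r\colon A(G)\twoheadrightarrow A(H)$ is a complete quotient map: every object $Y$ of $A(H)-\mathbf{mod}$ becomes an object of $A(G)-\mathbf{mod}$ via $a\cdot y:=r(a)\cdot y$, with completely contractive module map, and any completely isometric $A(H)$-module morphism is then automatically a completely isometric $A(G)$-module morphism, since both the action and the notion of complete isometry factor through $r$.

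Next I would verify that $i_H$ is a completely isometric $A(G)$-module morphism, where $VN(H)$ carries the $A(G)$-action pulled back along $r$. On the generators $\lm_H(t)$, $t\in H$, one computes $i_H(r(a)\cdot\lm_H(t))=a(t)\lm_G(t)=a\cdot i_H(\lm_H(t))$ for $a\in A(G)$; since $i_H$ and both actions $x\mapsto(\id\ten a)\Gam(x)$ are normal, this identity extends from the weak$^*$-dense span of $\{\lm_H(t):t\in H\}$ to all of $VN(H)$. The essential additional ingredient is a normal $A(G)$-module conditional expectation $E\colon \LG\to VN_H(G)$ with $\norm{E}_{cb}=1$ and $E|_{VN_H(G)}=\id$.

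Granting $E$, let $\Psi\colon Y\hookrightarrow Z$ be a completely isometric morphism in $A(H)-\mathbf{mod}$ and $\Phi\colon Y\to VN(H)$ a morphism, and regard $Y,Z$ as $A(G)$-modules. Then $i_H\circ\Phi\colon Y\to\LG$ is an $A(G)$-module morphism with $\norm{i_H\circ\Phi}_{cb}=\norm{\Phi}_{cb}$, while $\Psi$ is a completely isometric $A(G)$-module morphism, so $C$-injectivity of $\LG$ supplies $\widetilde\Psi\colon Z\to\LG$ with $\widetilde\Psi\circ\Psi=i_H\circ\Phi$ and $\norm{\widetilde\Psi}_{cb}\leq C\norm{\Phi}_{cb}$. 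I would then set $\widetilde\Phi:=i_H^{-1}\circ E\circ\widetilde\Psi\colon Z\to VN(H)$. The bound $\norm{\widetilde\Phi}_{cb}\leq C\norm{\Phi}_{cb}$ is immediate from $\norm{i_H^{-1}}_{cb}=\norm{E}_{cb}=1$; the identity $E|_{VN_H(G)}=\id$ gives $\widetilde\Phi\circ\Psi=i_H^{-1}E\,i_H\Phi=\Phi$; and writing $c=r(b)$ and chasing the $A(G)$-module property of $\widetilde\Psi$ and $E$ together with the relation $i_H^{-1}(b\cdot v)=r(b)\cdot i_H^{-1}(v)$ shows that $\widetilde\Phi$ is an $A(H)$-module map.

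I expect the construction of $E$ to be the main obstacle. For discrete $G$ it is the canonical projection $\sum_s x_s\lm(s)\mapsto\sum_{s\in H}x_s\lm(s)$, which is an $A(G)$-module map precisely because the projection and the action $a\cdot\lm(s)=a(s)\lm(s)$ are both diagonal in the $\lm(s)$. For general $G$ one must exhibit a normal norm-one projection of $\LG$ onto $VN_H(G)$ commuting with the $A(G)$-action; I would obtain this from the standard theory of conditional expectations onto $VN_H(G)$ and verify module-compatibility on the weak$^*$-dense $*$-algebra generated by $\{\lm(s):s\in H\}$, extending by normality exactly as in the intertwining step above.
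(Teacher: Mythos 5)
The gap is exactly where you anticipate it: the norm-one $A(G)$-module conditional expectation $E\colon VN(G)\to VN_H(G)$ does not come from ``the standard theory of conditional expectations,'' and without it your argument collapses. In fact no such projection --- even one that is merely completely bounded, with no module property and no normality --- can exist for general pairs $H\leq G$: by \cite{CS}, a completely boundedly complemented von Neumann subalgebra of an injective von Neumann algebra is injective, so the unconditional existence of $E$ would force injectivity of $VN(G)$ to pass to $VN_H(G)\cong VN(H)$ for every closed subgroup. This fails for $\F_2\leq SL(2,\R)$: $VN(SL(2,\R))$ is injective (almost connected group), while $VN(\F_2)$ is not. (Weight-theoretic existence results do not help either: the Plancherel weight of $VN(G)$ restricts to a semifinite weight on $VN_H(G)$ only when $H$ is open, and the discrete case --- your one worked example --- is misleadingly easy.) So $E$ would have to be extracted from the standing hypothesis, and here your scheme hits a second obstruction: the natural attempt --- extend $\id_{VN_H(G)}$ along the inclusion $VN_H(G)\subseteq VN(G)$ using $C$-injectivity --- produces a map $VN(G)\to VN(G)$ fixing $VN_H(G)$ whose range cannot be forced back into $VN_H(G)$, because elements of $I(H)$ act nontrivially on the domain $VN(G)$.

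The paper's proof is designed precisely to dodge this. It replaces the ambient module $VN(G)$ by $\mc{B}(L^2(H))$, equipped with the $A(G)$-action $u\cdot T=(\id\ten r(u))\Gam^r(T)$ that factors through $r$, and uses $C$-injectivity of $VN(G)$ to extend the inclusion $VN(H)\cong VN_H(G)\hookrightarrow VN(G)$ to a morphism $E\colon\mc{B}(L^2(H))\to VN(G)$. Because $I(H)$ acts as zero on this domain and $I(H)$ is an essential ideal, the identity $\la E(T),u\cdot v\ra=\la E(v\cdot T),u\ra=0$ forces $E(T)\in I(H)^{\perp}=VN_H(G)$; this range-localization step has no analogue in your setup. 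The resulting $A(H)$-module projection $E\colon\mc{B}(L^2(H))\to VN(H)$ with $\norm{E}_{cb}\leq C$ is then combined, not with a transport of lifting problems to $A(G)-\mathbf{mod}$, but with the fact (from the proof of \cite[Theorem 5.5]{CN}, using that $VN(H)$ always carries an $A(H)$-invariant state) that $\mc{B}(L^2(H))$ is $1$-injective in $A(H)-\mathbf{mod}$; composing extensions into $\mc{B}(L^2(H))$ with $E$ yields $C$-injectivity of $VN(H)$. To salvage your outline you would need to prove the existence of your $E$ under the standing hypothesis, which is essentially the content of the proposition itself.
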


\begin{pf} Let $r:A(G)\twoheadrightarrow A(H)$ be the complete quotient map given by restriction. Then $\mc{B}(L^2(H))$ becomes a left $A(G)$-module via
$$u\cdot T=(\id\ten r(u))\Gam^r(T), \ \ \ u\in A(G), \ T\in\mc{B}(L^2(H)),$$
where $\Gam^r:\mc{B}(L^2(H))\rightarrow\mc{B}(L^2(H))\oten VN(H)$ is the canonical lifting of the co-multiplication on $VN(H)$, given by
$$\Gam^r(T)=V(T\ten 1)V^*, \ \ \ T\in\mc{B}(L^2(H)).$$
Clearly, $VN(H)$ is a closed $A(G)$-submodule of $\mc{B}(L^2(H))$. Hence, the inclusion $VN(H)\hookrightarrow\LG$ extends to a morphism $E:\mc{B}(L^2(H))\rightarrow\LG$ with $\norm{E}_{cb}\leq C$. We show that $E(\mc{B}(L^2(H)))=VN(H)$. To this end, fix $T\in \mc{B}(L^2(H))$. Then for $u\in A(G)$ and $v\in I(H)$, we have
$$\la E(T),u\cdot v\ra=\la v\cdot E(T),u\ra=\la E(v\cdot T),u\ra=0$$
as $r(v)=0$. Since $I(H)$ is essential it follows that $E(T)\in I(H)^{\perp}=VN(H)$. Thus, $E:\mc{B}(L^2(H))\rightarrow VN(H)$ is a completely bounded $A(H)$-module projection with $\norm{E}_{cb}\leq C$. Since $VN(H)$ has an $A(H)$-invariant state $m\in VN(H)^*$ satisfying
$$\la m,u\cdot x\ra=u(e)\la m,x\ra, \ \ \ u\in A(H), \ x\in VN(H),$$
it follows that $VN(H)$ is an amenable quantum group, and the proof of \cite[Theorem 5.5]{CN} implies that $\mc{B}(L^2(H))$ is $1$-injective in $A(H)-\mathbf{mod}$. Thus, $VN(H)$ is $C$-injective in $A(H)-\mathbf{mod}$.
\qed\end{pf}

\begin{cor}\label{c:C-inj} Let $G$ be a locally compact group such that $\LG$ is $C$-injective in $A(G)-\mathbf{mod}$ for some $C\geq 1$. Then every closed inner amenable subgroup of $G$ is amenable.
\end{cor}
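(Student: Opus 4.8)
The plan is to pass to the subgroup $H$, extract a completely bounded projection onto $VN(H)$, upgrade it to genuine injectivity of the von Neumann algebra, and then invoke the Lau--Paterson criterion for amenability. Let $H$ be a closed inner amenable subgroup of $G$. Since $VN(G)$ is $C$-injective in $A(G)-\mathbf{mod}$, Proposition \ref{p:C-inj} gives that $VN(H)$ is $C$-injective in $A(H)-\mathbf{mod}$. As in the proof of that proposition, I would equip $\mc{B}(L^2(H))$ with the left $A(H)$-module structure coming from the lifted co-multiplication $\Gam^r$, for which the inclusion $VN(H)\hookrightarrow\mc{B}(L^2(H))$ is a completely isometric morphism in $A(H)-\mathbf{mod}$ whose restriction to $VN(H)$ is the canonical module structure. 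Applying the (absolute) $C$-injectivity of $VN(H)$ to this embedding and to $\id_{VN(H)}$ produces a morphism $E:\mc{B}(L^2(H))\to VN(H)$ with $E|_{VN(H)}=\id$ and $\norm{E}_{cb}\leq C$; that is, $E$ is a completely bounded projection of $\mc{B}(L^2(H))$ onto $VN(H)$.

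The decisive step is to convert the completely bounded complementation into injectivity of $VN(H)$ as a von Neumann algebra. Since $\mc{B}(L^2(H))$ is itself injective and $VN(H)$ is completely boundedly complemented in it, $VN(H)$ is $C$-injective as an operator space, i.e. $C$-injective in $\C-\mathbf{mod}$. For a von Neumann algebra, however, being the range of a completely bounded projection forces the existence of a norm-one (completely positive) conditional expectation onto it, so $VN(H)$ is in fact an injective von Neumann algebra and hence $1$-injective in $\C-\mathbf{mod}$. I expect this upgrade from the finite constant $C$ to the constant $1$ to be the main obstacle; it can be handled either by the standard fact that a completely boundedly complemented von Neumann algebra is injective, or, in keeping with the proof of Proposition \ref{p:C-inj}, by using the $A(H)$-invariant state on $VN(H)$ furnished by inner amenability (Proposition \ref{p:IA}) to average $E$ into a completely positive contraction.

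With $VN(H)$ now $1$-injective in $\C-\mathbf{mod}$ and $H$ inner amenable, the Lau--Paterson equivalence \cite[Corollary 3.2]{LP1} immediately yields that $H$ is amenable. Equivalently, one may phrase the conclusion homologically: inner amenability gives relative $1$-injectivity of $VN(H)$ in $A(H)-\mathbf{mod}$ by Theorem \ref{t:IA=relinj}, which combined with $1$-injectivity in $\C-\mathbf{mod}$ and the recast of Lau--Paterson shows that $VN(H)$ is $1$-injective in $A(H)-\mathbf{mod}$, i.e. $H$ is amenable.
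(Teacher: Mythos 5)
Your proof is correct and follows essentially the same route as the paper: Proposition~\ref{p:C-inj} to get module $C$-injectivity of $VN(H)$, extraction of a completely bounded projection $E:\mc{B}(L^2(H))\to VN(H)$, the Christensen--Sinclair theorem (your ``standard fact'' that a completely boundedly complemented von Neumann algebra is injective, cited in the paper as \cite[Theorem 3.1]{CS}) to upgrade to injectivity of $VN(H)$, and finally the Lau--Paterson equivalence \cite[Corollary 3.2]{LP1}. The only difference is cosmetic: your proposed alternative of averaging $E$ by an invariant state is unnecessary (and less clearly justified), since the complementation-implies-injectivity theorem already does the work.
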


\begin{pf} By Proposition \ref{p:C-inj} we know that $VN(H)$ is $C$-injective in $A(H)-\mathbf{mod}$ for any closed subgroup $H$. Hence, there exists a completely bounded projection $E:\mc{B}(L^2(H))\rightarrow VN(H)$, which, by \cite[Theoerem 3.1]{CS} (see also \cite{Pi}) implies that $VN(H)$ is an injective von Neumann algebra. If $H$ is inner amenable, then by \cite[Corollary 3.2]{LP1} it is necessarily amenable.
\qed\end{pf}

\begin{cor}\label{c:F_2} Let $G$ be a locally compact group containing $\F_2$ as a closed subgroup and for which $\LG$ is $1$-injective in $\C-\mathbf{mod}$. Then $\LG$ is not relatively $C$-injective in $A(G)-\mathbf{mod}$ for any $C\geq1$.
\end{cor}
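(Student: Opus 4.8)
The plan is to argue by contradiction, using the hypothesis that $\LG$ is $1$-injective in $\C-\mathbf{mod}$ as the lever that converts \emph{relative} injectivity into \emph{full} injectivity, after which the problem transfers to the closed subgroup $\F_2$, where the free group factor furnishes the obstruction. So suppose, toward a contradiction, that $\LG$ is relatively $C$-injective in $A(G)-\mathbf{mod}$ for some $C\geq 1$.

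The key step is to upgrade this to full $C$-injectivity in $A(G)-\mathbf{mod}$. Relative $C$-injectivity supplies a morphism $\Phi^+:\mc{CB}(A(G)^+,\LG)\rightarrow\LG$ with $\norm{\Phi^+}_{cb}\leq C$ that splits the canonical completely contractive morphism $\Delta_{\LG}^+$. I would first show that the cofree module $\mc{CB}(A(G)^+,\LG)$ is $1$-injective (in the full sense) in $A(G)-\mathbf{mod}$. This rests on the natural completely isometric adjunction $\mc{CB}_{A(G)}(Y,\mc{CB}(A(G)^+,\LG))\cong\mc{CB}(Y,\LG)$: given a completely isometric morphism $Y\hookrightarrow Z$ and a morphism $Y\rightarrow\mc{CB}(A(G)^+,\LG)$, pass to the corresponding plain completely bounded map $Y\rightarrow\LG$, extend it to $Z$ using the $1$-injectivity of $\LG$ in $\C-\mathbf{mod}$ without increasing the cb-norm, and re-adjoin to obtain a module morphism $Z\rightarrow\mc{CB}(A(G)^+,\LG)$ extending the original. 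Since $\LG$ is then a completely bounded retract of this $1$-injective module, via the completely contractive section $\Delta_{\LG}^+$ and the retraction $\Phi^+$ of cb-norm at most $C$, a routine diagram chase (extend $\Delta_{\LG}^+\circ\Phi$ through the injective cofree module and post-compose with $\Phi^+$) shows that $\LG$ is $C$-injective in $A(G)-\mathbf{mod}$.

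With full $C$-injectivity established, I would apply \propref{p:C-inj} to the closed subgroup $H=\F_2$ to conclude that $VN(\F_2)$ is $C$-injective in $A(\F_2)-\mathbf{mod}$. Exactly as in the proof of \corref{c:C-inj}, this produces a completely bounded projection $\mc{B}(\ell^2(\F_2))\rightarrow VN(\F_2)$, so by \cite{CS} the von Neumann algebra $VN(\F_2)$ is injective. This is the sought contradiction, since $\F_2$ is a non-amenable discrete group and hence $VN(\F_2)$, the free group factor, is not injective. Therefore no such $C$ exists, and $\LG$ is not relatively $C$-injective in $A(G)-\mathbf{mod}$ for any $C\geq 1$.

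I expect the combination step of the second paragraph to be the main obstacle, and it is precisely where the hypothesis that $\LG$ is $1$-injective in $\C-\mathbf{mod}$ is indispensable. The naive alternative of transferring relative injectivity directly to $\F_2$ fails: relative $C$-injectivity for $C>1$ is not known to force inner amenability, and in any event $\F_2$ is not inner amenable, so relative injectivity alone yields no contradiction there. It is the $\C-\mathbf{mod}$ injectivity that promotes the relative statement to a full one to which \propref{p:C-inj} applies, after which the non-injectivity of the free group factor closes the argument.
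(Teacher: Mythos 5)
Your proof is correct and follows essentially the same route as the paper's: your upgrade from relative $C$-injectivity to full $C$-injectivity via the $1$-injective cofree module $\mc{CB}(A(G)^+,\LG)$ is precisely the result the paper cites as \cite[Proposition 2.3]{C}, and the transfer to $\F_2$ via \propref{p:C-inj} followed by Christensen--Sinclair is exactly the mechanism inside \corref{c:C-inj}. The only divergence is the endgame: you invoke the (well-known) non-injectivity of the free group factor $VN(\F_2)$ directly, whereas the paper applies \corref{c:C-inj}, which uses that $\F_2$ \emph{is} inner amenable in the paper's (Paterson's) sense---every discrete group is, since $\delta_e$ gives a conjugation-invariant mean---together with Lau--Paterson to conclude that $\F_2$ would be amenable; the two endings encode the same fact, since non-injectivity of $VN(\F_2)$ is itself usually derived from ``injective plus discrete implies amenable.'' This exposes the one slip in your closing commentary: the claim that ``$\F_2$ is not inner amenable'' is true only for Effros's definition (which excludes $\delta_e$), not for the convention fixed in Section 3 of the paper, and indeed the paper's own proof hinges on $\F_2$ being inner amenable in that sense. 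Fortunately your main argument never uses this claim, so the proof stands as written.
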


\begin{pf} If $\LG$ were relatively $C$-injective in $A(G)-\mathbf{mod}$, then it would be $C$-injective in $A(G)-\mathbf{mod}$ by \cite[Proposition 2.3]{C}. Since $\F_2$ is inner amenable, Corollary \ref{c:C-inj} would imply that it is amenable, which is absurd.
\qed\end{pf}

Since almost connected groups have injective von Neumann algebras (see \cite{Pat3} and the references therein), and are non-amenable precisely when they contain $\F_2$ has a closed subgroup \cite[Theorem 5.5]{Rick}, Corollary \ref{c:F_2} implies that any non-amenable almost connected group $G$ cannot have a relatively $C$-flat (and hence $C$-biflat) Fourier algebra for any $C\geq 1$. In particular, $A(SL(n,\R))$, $A(SL(n,\C))$ and $A(SO(1,n))$ are not relatively flat (or biflat) for $n\geq 2$. This result builds on the analysis of \cite[\S4]{ARS}, where it was suspected that $A(SL(3,\C))$ would fail to be relatively biflat.

Regarding the relative projectivity of $A(G)$, we now establish the converse to \cite[Lemma 3.2]{RX}, providing a partial solution to the open question of relative projectivity of $A(G)$ \cite[\S4]{FLS}.

\begin{prop} Let $G$ be a locally compact group. Then $A(G)$ is relatively $1$-projective in $\mathbf{mod}-A(G)$ if and only if $G$ is an IN group.
\end{prop}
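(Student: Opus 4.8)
The plan is to prove only the nontrivial implication—that relative $1$-projectivity of $A(G)$ forces $G$ to be IN—since the forward direction is precisely \cite[Lemma 3.2]{RX}. The strategy is to dualize the projectivity splitting and then run the argument from the converse direction of Theorem \ref{t:IA=relinj} essentially verbatim, the crucial new feature being that the resulting left inverse of $\Gam$ is \emph{normal}; this is exactly what upgrades the invariant state produced there from an arbitrary state (inner amenability) to a normal one (the IN condition).

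First I would record that $A(G)$ is essential as a right module over itself: every $u\in A(G)\cap C_c(G)$ satisfies $u=uv$ for some $v\in A(G)$ equal to $1$ on $\mathrm{supp}(u)$, and such $u$ are dense. Hence relative $1$-projectivity supplies a complete contraction $\Phi:A(G)\to A(G)\hten A(G)$ that is a right $A(G)$-module map with $m\circ\Phi=\id_{A(G)}$. Taking adjoints and using $(A(G)\hten A(G))^*=VN(G\times G)$ and $m^*=\Gam$, the map $\Psi:=\Phi^*:VN(G\times G)\to\LG$ is a normal complete contraction, a left $A(G)$-module map (the dual of a right-module map between right modules), and a left inverse of $\Gam$, i.e. $\Psi\circ\Gam=\id_{\LG}$. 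This is precisely the object appearing in the converse of Theorem \ref{t:IA=relinj}, with the single additional property of normality.

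Following that proof, $\Gam\circ\Psi$ is a normal norm-one projection onto the von Neumann subalgebra $\Gam(\LG)$, hence by \cite{To} a conditional expectation, so that $x\Psi(T)y=\Psi(\Gam(x)T\Gam(y))$ for all $x,y\in\LG$ and $T\in VN(G\times G)$. The module identity forces $\Psi(x\ten1)\in\C1$, and since $x\mapsto x\ten1$ and $\Psi$ are both normal, $\la m,x\ra:=\Psi(x\ten1)$ defines a \emph{normal} state on $\LG$; the computation with $\Gam(\lm(s))=\lm(s)\ten\lm(s)$ gives $\la m,\lm(s)x\lm(s)^*\ra=\la m,x\ra$, so $m$ is a normal $G$-invariant state. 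By standardness of $\LG$ (cf. \cite{H}), $m=\om_\xi$ for a unique unit vector $\xi\in\mc{P}$; $G$-invariance reads $\om_{\be_2(s)\xi}=\om_\xi$, and since $\be_2(s)\mc{P}\subseteq\mc{P}$, uniqueness of the positive representative together with \cite[Lemma 2.10]{H} yields $\be_2(s)\xi=\xi$ for every $s\in G$.

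Finally, from the $\be_2$-fixed vector I would manufacture a compact invariant neighbourhood of $e$. Set $\vphi(t)=\la\lm(t)\xi,\xi\ra\in A(G)$. Rewriting $\be_2(s)\xi=\xi$ as $\lm(s)\xi=\rho(s)^{-1}\xi$ and using that $\lm$ and $\rho$ commute while $\rho(s)^*x\rho(s)=x$ for $x\in\LG$, one checks $\vphi(s^{-1}ts)=\vphi(t)$, so $\vphi$ is conjugation invariant, whereas $\vphi(e)=\norm{\xi}_{\LT}^2=1$. As $\vphi\in A(G)\subseteq C_0(G)$, the set $\{t:|\vphi(t)|\geq 1/2\}$ is compact, conjugation invariant, and a neighbourhood of $e$, so $G$ is IN. I expect the main obstacle to be the bookkeeping of the third paragraph—verifying that the adjoint $\Psi$ carries exactly the left-module and left-inverse structure required to invoke the Theorem \ref{t:IA=relinj} machinery, and that its normality descends to $m$—after which the passage from a merely invariant state to a normal one realized by an honest $C_0$ coefficient function is the conceptual heart of the converse.
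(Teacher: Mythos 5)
Your proposal is correct and follows the paper's own route: dualize the projectivity splitting (using essentiality of $A(G)$) to obtain a \emph{normal} completely contractive left $A(G)$-module left inverse $\Psi$ of $\Gam$, then run the converse argument of Theorem~\ref{t:IA=relinj} to produce a normal $G$-invariant state on $\LG$, and quote \cite[Lemma 3.2]{RX} for the forward direction. The only point of divergence is the final step: the paper simply cites \cite[Proposition 4.2]{Tay} for the implication ``normal $G$-invariant state $\Rightarrow$ IN,'' whereas you prove it directly --- uniqueness of representing vectors in $\mc{P}$ together with $\beta_2(s)\mc{P}\subseteq\mc{P}$ upgrades invariance of $m=\om_\xi$ to the fixed-vector equation $\beta_2(s)\xi=\xi$, and the conjugation-invariant coefficient function $\vphi=\la\lm(\cdot)\xi,\xi\ra\in A(G)\subseteq C_0(G)$ with $\vphi(e)=1$ then yields the compact invariant neighbourhood $\{t:|\vphi(t)|\geq 1/2\}$. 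That unpacking is sound (the commutation of $\rho$ with $\lm$ gives conjugation invariance of $\vphi$ exactly as you indicate), so your version buys a self-contained proof at the cost of reproving Taylor's result; otherwise the two arguments coincide.
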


\begin{pf} Assuming relative $1$-projectivity of $A(G)$ in $\mathbf{mod}-A(G)$, there exists a normal completely contractive left $A(G)$-module map $\Phi:VN(G\times G)\rightarrow\LG$ such that $\Phi\circ\Gam=\id_{\LG}$. By the proof of Theorem \ref{t:IA=relinj}    we obtain a normal $G$-invariant state on $\LG$, which, by \cite[Proposition 4.2]{Tay} implies that $G$ is IN. The converse follows from \cite[Lemma 3.2]{RX}.
\qed\end{pf}

\section{Relative biflatness of $A(G)$}\label{s:relbiflat}

Given a locally compact group $G$ and a closed subgroup $H$, a bounded net $(\vphi_\alpha)$ in $B(G)$ is called an \e{approximate indicator} for $H$ \cite[Definition 2.1]{ARS} if
\begin{enumerate}
\item $\lim_{\alpha} (\vphi_{\alpha}|_{H})\cdot u=u$ for all $u\in A(H)$;
\item $\lim_{\alpha} \vphi_{\alpha}\cdot v=0$ for all $v\in I(H)$.
\end{enumerate}
If $\norm{\vphi_\alpha}_{B(G)}\leq 1$ for all $\alpha$ we say that $(\vphi_\alpha)$ is a \e{contractive approximate indicator} for $H$.

In \cite[Proposition 2.3]{ARS} it was shown that $A(G)$ is relatively $C$-biflat if the diagonal subgroup $G_\Delta\leq G\times G$ has an approximate indicator $(\vphi_{\alpha})$ with $\norm{\vphi_{\alpha}}_{B(G)}\leq C$. We now establish the converse when $C=1$, which is one of the main results of the paper.

\begin{thm}\label{t:biflat} Let $G$ be a locally compact group. Then $A(G)$ is relatively 1-biflat if and only if $G_\Delta$ has a contractive approximate indicator.
\end{thm}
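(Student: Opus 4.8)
The backward direction is already established in \cite[Proposition 2.3]{ARS}: a contractive approximate indicator for $G_\Delta$ yields relative $1$-biflatness. So the plan is to prove the forward direction, extracting a contractive approximate indicator from relative $1$-biflatness. The strategy mirrors the structure of \thmref{t:IA=relinj}, but now in the bimodule category, so I would begin by unwinding what relative $1$-biflatness of $A(G)$ gives concretely. By definition it means $VN(G)$ is relatively $1$-injective in $A(G)-\mathbf{mod}-A(G)$, and since $VN(G)$ is faithful as an $A(G)$-bimodule, this is equivalent (via the bimodule analogue of \cite[Proposition 1.7]{DP}) to the existence of a completely contractive $A(G)$-\emph{bimodule} map $\Phi:VN(G\times G)\rightarrow VN(G)$ with $\Phi\circ\Gam=\id_{VN(G)}$, using that $\Delta_{VN(G)}=\Gam$ under $\mc{CB}(A(G),VN(G))\cong VN(G\times G)$ as noted in the Preliminaries.

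The heart of the argument is to convert this bimodule splitting $\Phi$ into an element of $B(G\times G)$, and then build the approximate indicator. As in \thmref{t:IA=relinj}, the composition $\Gam\circ\Phi:VN(G\times G)\rightarrow VN(G\times G)$ is a norm-one projection onto $\Gam(VN(G))$, hence by \cite{To} a conditional expectation and a $\Gam(VN(G))$-bimodule map. The key point is to read off from $\Phi$ a bounded net (or a single object) in $B(G\times G)=A(G\times G)^{**}$-adjacent data that implements the two defining properties of an approximate indicator for $G_\Delta$. Concretely, I expect $\Phi$ to dualize to an element of $A(G\times G)^{**}$, and I would identify its ``diagonal'' behaviour using the bimodule property: the left/right module identities $u\cdot\Phi=\Phi\circ(u\cdot\Gam)$ and $\Phi\cdot v=\Phi\circ(\Gam\cdot v)$ should force the correct interaction with $A(G_\Delta)\cong A(G)$ and with $I(G_\Delta)=\ker(r:A(G\times G)\to A(G))$. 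A standard weak*-density/convexity argument (invoking Cohen factorization or a net approximating the relevant functional) should then replace the single $B(G\times G)^{**}$-object by an honest contractive net $(\vphi_\alpha)$ in $B(G\times G)$ satisfying $(\vphi_\alpha|_{G_\Delta})\cdot u\to u$ for $u\in A(G_\Delta)$ and $\vphi_\alpha\cdot v\to 0$ for $v\in I(G_\Delta)$.

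I anticipate the main obstacle is controlling the \emph{norm}: producing \emph{some} bounded approximate indicator from relative $C$-biflatness is comparatively routine, but pinning the bound to exactly $1$ requires that the completely contractive bimodule splitting translate into a genuinely contractive net in $B(G\times G)$, with no loss of constant passing through the dualization, the projection-to-$\Gam(VN(G))$ step, and the weak* approximation. The delicate part is ensuring that the functional extracted from $\Phi$, which a priori lives in a bidual, can be approximated in the appropriate topology by norm-one elements of $B(G\times G)$ itself rather than merely by elements of norm $\leq 1+\ep$; this is exactly where one must exploit that $\Phi$ is completely contractive (not merely bounded) together with the standard form structure of $VN(G)$ recorded in the Preliminaries (the cone $\mc{P}$ and \cite[Lemma 2.10]{H}), so that positive-definiteness and unit norm can be secured simultaneously.
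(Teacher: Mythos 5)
Your setup matches the paper's: the backward direction is indeed \cite[Proposition 2.3]{ARS}, and the paper likewise begins by extracting a completely contractive $A(G)$-bimodule left inverse $\Phi:VN(G\times G)\rightarrow\LG$ to $\Gam$ and invoking Tomiyama \cite{To} to see that $\Gam\circ\Phi$ is a $\Gam(\LG)$-bimodule projection. But the core of your plan --- ``dualize $\Phi$ to an element of $A(G\times G)^{**}$, identify its diagonal behaviour, and replace it by a contractive net via a standard weak*-density/convexity argument'' --- has a genuine gap, and it is precisely where the real work lies. The map $\Phi$ furnished by relative $1$-injectivity is in general \emph{not} normal, so it has no preadjoint $A(G)\rightarrow A(G\times G)$ and does not ``dualize'' to any element of $A(G\times G)^{**}$ or $B(G\times G)^{**}$ in a way that interacts usefully with the module actions; and even granting some formal bidual object, Goldstine-plus-convexity only converts \emph{weak} convergence of module actions into norm convergence, so one must first produce a bounded net in $B(G\times G)$ for which the two approximate-indicator conditions hold at least weakly --- and that is exactly the step your sketch leaves blank.

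What the paper actually does is quite different in this middle portion. From $\Phi$ it extracts only a state $m=\Phi(\,\cdot\ten 1)\in\LG^*$, which the bimodule property makes $A(G)$-invariant; then --- and this is the step your proposal does not anticipate at all --- it proves that $m$ is additionally invariant under the conjugation action $x\lhd f$ of $\LO$, by extending $\Gam\circ\Phi$ to $\mc{B}(L^2(G\times G))$ via Wittstock's bimodule extension theorem \cite{Witt2} and approximating by \emph{normal} $\Gam(\LG)$-bimodule maps \cite{MNW} so as to commute $\Phi$ past the averaging maps $\h{\theta}_f$. This $\LO$-invariance drives a smoothing argument ($u_\gamma\mapsto f_0\lhd u_\gamma$) yielding conjugation invariance uniformly on compact sets, which via the standard form and the Powers--St{\o}rmer inequality \cite[Lemma 2.10]{H} produces vectors $\xi_\gamma\in\mc{P}$ with $\norm{\beta_2(s)\xi_\gamma-\xi_\gamma}_{\LT}\rightarrow0$ uniformly on compacta. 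The approximate indicator is then constructed \emph{explicitly} as $\vphi_\gamma(s,t)=\la\lm(s)\rho(t)\xi_\gamma,\xi_\gamma\ra\in\mc{P}_1(G\times G)$, and the verification that these satisfy the two indicator conditions (weakly, then in norm after convex combinations) requires showing that the associated multipliers $\Theta(\vphi_\gamma)$ cluster to an $A(G\times G)$-module projection onto $VN(G_\Delta)$ --- a nontrivial computation with the fundamental unitary $V$, the pentagonal relation, and a cluster state $M$ of $(\om_{\xi_\gamma})$. Note also that your anticipated ``main obstacle'' (pinning the bound to $1$) is in fact free in this approach: the net lies in $\mc{P}_1(G\times G)$ by construction, so contractivity costs nothing; the genuine difficulty is the $\LO$-invariance and the projection argument, neither of which appears in your outline.
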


\begin{pf} We need only establish necessity. Consider the right $\LO$-action on $\LG$ given by
$$x\lhd f=\int_G\lm(s)^*x\lm(s)f(s)ds, \ \ \ x\in\LG, \ f\in\LO.$$
For $f\in\LO$, we let $\h{\Theta}(f):\LG\rightarrow\LG$ and $\h{\theta}_f:VN(G\times G)\rightarrow VN(G\times G)$ be the normal completely bounded maps given respectively by $\h{\Theta}(f)(x)=x\lhd f$, $x\in\LG$, and
$$\h{\theta}_f(X)=\int_G (\lm(s)^*\ten\lm(s)^*)X(\lm(s)\ten\lm(s))f(s)ds, \ \ \ X\in VN(G\times G).$$
Relative 1-biflatness of $A(G)$ implies the existence of a completely contractive $A(G)$-bimodule left inverse $\Phi:VN(G\times G)\rightarrow\LG$ to $\Gam$. It follows as in Theorem \ref{t:IA=relinj}    that $\Gam\circ\Phi$ is a $\Gam(\LG)$-bimodule map. By Wittstock's bimodule extension theorem \cite{Witt2}, this map extends to an $\Gam(\LG)$-bimodule map $\Psi:\mc{B}(L^2(G\times G))\rightarrow\mc{B}(L^2(G\times G))$. Moreover, \cite[Lemma 2.3]{MNW} allows us to approximate $\Psi$ in the point weak* topology by a net $(\Psi_\alpha)$ of normal completely bounded $\Gam(\LG)$-bimodule maps. Thus, for any $X\in VN(G\times G)$, we have
\begin{align*}\Gam\circ\Phi(\h{\theta}_f(X))&=\Psi(\h{\theta}_f(X))=\Psi\bigg(\int_G (\lm(s)^*\ten\lm(s)^*)X(\lm(s)\ten\lm(s))f(s)ds\bigg)\\
&=\lim_\alpha\Psi_\alpha\bigg(\int_G (\lm(s)^*\ten\lm(s)^*)X(\lm(s)\ten\lm(s))f(s)ds\bigg)\\
&=\lim_\alpha\bigg(\int_G \Psi_\alpha((\lm(s)^*\ten\lm(s)^*)X(\lm(s)\ten\lm(s)))f(s)ds\bigg)\\
&=\lim_\alpha\bigg(\int_G \Psi_\alpha(\Gam(\lm(s)^*)X\Gam(\lm(s)))f(s)ds\bigg)\\
&=\lim_\alpha\bigg(\int_G \Gam(\lm(s)^*)\Psi_\alpha(X)\Gam(\lm(s))f(s)ds\bigg)\\
&=\lim_\alpha\h{\theta}_f(\Psi_\alpha(X))=\h{\theta}_f(\Psi(X))=\h{\theta}_f(\Gam\circ\Phi(X)),
\end{align*}
where we used normality of $\Psi_\alpha$ and $\h{\theta}_f$ in the fourth and eighth equality, respectively. By definition of $\h{\theta}_f$, we have $\h{\theta}_f\circ\Gam=\Gam\circ\h{\Theta}(f)$, so the above calculation entails $\Gam\circ\Phi\circ\h{\theta}_f=\Gam\circ\h{\Theta}(f)\circ\Phi$, which, by injectivity of $\Gam$, implies $\Phi\circ\h{\theta}_f=\h{\Theta}(f)\circ\Phi$.

As in the proof of Theorem \ref{t:IA=relinj}   , the restriction $\Phi|_{\LG\ten 1}$ defines a state $m\in\LG^*$. The bimodule property of $\Phi$ ensures that $m$ is invariant for the $A(G)$-action on $\LG$, that is,
$$\la m,u\cdot x\ra=u(e)\la m,x\ra, \ \ \ x\in\LG, \ u\in A(G).$$
Moreover, for $f\in\LO$ and $x\in\LG$ we have
\begin{align*}\la m,x\lhd f\ra&=\Phi\bigg(\int_G (\lm(s)^*\ten\lm(s)^*)(x\ten1)(\lm(s)\ten\lm(s))f(s)ds\bigg)\\
&=\Phi(\h{\theta}_f(x\ten 1))\\
&=\h{\Theta}(f)(\Phi(x\ten 1))\\
&=\la f,1\ra\la m,x\ra.
\end{align*}
Approximating $m\in VN(G)^*$ in the weak* topology by a net of states $(u_\beta)$ in $A(G)$, it follows that
$$u_\beta\cdot v-v(e)u_\beta\rightarrow 0 \ \ \textnormal{and} \ \ f\lhd u_\beta-\la f,1\ra u_\beta\rightarrow 0$$
weakly in $A(G)$ for all $v\in A(G)$ and $f\in\LO$, where $f\lhd u_\beta=(\h{\Theta}(f))_*(u_\beta)$. By the standard convexity argument, we obtain a net of states $(u_\gamma)$ in $A(G)$ satisfying
\begin{equation}\label{e:L1inv}\norm{u_\gamma\cdot v-v(e)u_\gamma}_{A(G)}, \ \norm{f\lhd u_\gamma-\la f,1\ra u_\gamma}_{A(G)}\rightarrow0, \ \ \ v\in A(G), \ f\in\LO.\end{equation}

For $s\in G$ and $v\in A(G)$ we define $s\lhd v\in A(G)$ by $s\lhd v(t)=v(s^{-1}ts)$, $t\in G$. Then by left invariance of the Haar measure it follows that
\begin{equation}\label{e:inv}s\lhd(f\lhd v)=(l_sf)\lhd v, \ \ \ s\in G, \ f\in\LO, \ v\in A(G),\end{equation}
where $l_sf(t)=f(st)$, $s,t\in G$. Fix a state $f_0\in\LO$, and consider the net $(f_0\lhd u_\gamma)$. For $\varepsilon>0$, take a neighbourhood $U$ of the identity $e\in G$ such that
$$\norm{l_sf_0-f_0}_{\LO}<\frac{\varepsilon}{2},  \ \ \ s\in U.$$
Then for any compact set $K\subseteq G$, there exist $s_1,...,s_n\in K$ such that $K\subseteq\cup_{i=1}^n Us_i$. Take $\gamma_\varepsilon$ such that for $\gamma\geq\gamma_\varepsilon$
\begin{equation*}\norm{(l_{s_i}f_0)\lhd u_\gamma-u_\gamma}_{A(G)}<\frac{\varepsilon}{4}, \ \ \ 1\leq i\leq n.\end{equation*}
Applying (\ref{e:inv}) together with the $\LO$-invariance in (\ref{e:L1inv}), it follows by the standard argument (see \cite[Lemma 7.1.1]{R1}) that
$$\norm{k\lhd(f_0\lhd u_\gamma)-f_0\lhd u_\gamma}_{A(G)}<\varepsilon, \ \ \ k\in K.$$
Hence, the net $(f_0\lhd\psi_\gamma)$ satisfies
$$\norm{s\lhd(f_0\lhd u_\gamma)-f_0\lhd u_\gamma}_{A(G)}\rightarrow 0, \ \ \ s\in G,$$
uniformly on compact sets. Using both the $A(G)$ and $\LO$-invariance from equation (\ref{e:L1inv}), a $3\varepsilon$-argument also shows that
$$\norm{(f_0\lhd u_\gamma)\cdot v-v(e)f_0\lhd u_\gamma}_{A(G)}\rightarrow 0, \ \ \ v\in A(G).$$
Forming $|f_0\lhd u_\gamma|^2$, we may further assume $f_0\lhd u_\gamma(s)\geq0$ for all $s\in G$, as one may easily verify using boundedness and multiplicativity of the $G$-action that
$$\norm{u\cdot|f_0\lhd u_\gamma|^2-u(e)|f_0\lhd u_\gamma|^2}_{A(G)}, \ \norm{s\lhd|f_0\lhd u_\gamma|^2-|f_0\lhd u_\gamma|^2}_{A(G)}\rightarrow0$$
for all $u\in A(G)$ and for all $s\in G$, uniformly on compact sets.

Now, since $\LG$ is standardly represented on $\LT$, there exist unit vectors $\xi_\gamma\in\mc{P}$ satisfying
$$\om_{\xi_\gamma}|_{\LG}=f_0\lhd u_\gamma.$$
Note that $J\xi_\gamma=\xi_\gamma$ and that $\xi_\gamma$ is necessarily real-valued by uniqueness. For any $s\in G$ we have $s\lhd\om_{\xi_\gamma}=\om_{\beta_2(s)\xi_\gamma}$ and $\beta_2(s)\mc{P}\subseteq\mc{P}$. Thus \cite[Lemma 2.10]{H} implies
\begin{equation}\label{e:uni}\norm{\beta_2(s)\xi_\gamma-\xi_\gamma}_{\LT}^2\leq\norm{\om_{\beta_2(s)\xi_\gamma}-\om_{\xi_\gamma}}_{A(G)}= \norm{s\lhd u_\gamma- u_\gamma}_{A(G)}\rightarrow0\end{equation}
for all $s\in G$, uniformly on compact sets.

Define the function $\vphi_\gamma\in\mc{P}_1(G\times G)\subseteq B(G\times G)$ by
$$\vphi_\gamma(s,t)=\la\lm(s)\rho(t)\xi_\gamma,\xi_\gamma\ra, \ \ \ s,t\in G,$$
and consider the associated normal completely positive map $\Theta(\vphi_\gamma)\in\mc{CB}_{A(G\times G)}(VN(G\times G))$ given by
$$\Theta(\vphi_\gamma)(\lm(s)\ten\lm(t))=\vphi_\gamma(s,t)\lm(s)\ten\lm(t), \ \ \ s,t\in G.$$
We claim that the bounded net $(\Theta(\vphi_\gamma))$ clusters to a completely positive $A(G\times G)$-module projection $VN(G\times G)\rightarrow VN(G_\Delta)$.

To verify the claim, first consider the net $(\om_{\xi_\gamma})$ in $\TC=\BLT_*$. By passing to a subnet we may assume that $(\om_{\xi_\gamma})$ converges weak* to a state $M\in\BLT^*$. For each $\gamma$ define the unital completely positive map $\Phi_\gamma:VN(G\times G)\rightarrow VN(G)$ by
$$\Phi_\gamma(X)=(\id\ten\om_{\xi_\gamma})V(1\ten U)X(1\ten U)V^*, \ \ \ X\in VN(G\times G),$$
where $U$ is the self-adjoint unitary given by $U=\h{J}J$, and $\h{J}$ is complex conjugation on $\LT$. Since $\Gam(x)=V(x\ten 1)V^*$, $x\in VN(G)$, and $UVN(G)U=VN(G)'$, one easily sees that the range of $\Phi_\gamma$ is indeed contained in $VN(G)$.

For every $\gamma$ and $s,t\in G$, we have
\begin{align*}\Theta(\vphi_\gamma)(\lm(s)\ten\lm(t))&=\la\lm(s)\rho(t)\xi_\gamma,\xi_\gamma\ra\lm(s)\ten\lm(t)\\
&=(\id\ten\id\ten\om_{\xi_\gamma})(\lm(s)\ten\lm(t)\ten\lm(s)\rho(t))\\
&=(\id\ten\id\ten\om_{\xi_\gamma})(\lm(s)\ten1\ten\lm(s))(1\ten\lm(t)\ten\rho(t))\\
&=(\id\ten\id\ten\om_{\xi_\gamma})(\lm(s)\ten1\ten\lm(s))(1\ten(1\ten U)V(\lm(t)\ten 1)V^*(1\ten U))\\
&=(\id\ten\id\ten\om_{\xi_\gamma})(\lm(s)\ten1\ten1)(1\ten(1\ten U)V(\lm(t)\ten\rho(s))V^*(1\ten U))\\
&=(\id\ten\id\ten\om_{\xi_\gamma})(\lm(s)\ten1\ten1)(1\ten V(\lm(t)\ten\rho(s))V^*) \ \ \ \ (\textnormal{as $U\xi_\gamma=\xi_\gamma$})\\
&=(\id\ten\id\ten\om_{\xi_\gamma})(\lm(s)\ten1\ten1)(1\ten V((1\ten U)(\lm(t)\ten\lm(s))(1\ten U))V^*)\\
&=\lm(s)\ten\Phi_\gamma(\lm(t)\ten\lm(s))\\
&=\lm(s)\ten\Phi_\gamma(\Sigma(\lm(s)\ten\lm(t)))\\
&=(\id\ten\Phi_\gamma\circ\Sigma)(\lm(s)\ten\lm(s)\ten\lm(t)))\\
&=(\id\ten\Phi_\gamma\circ\Sigma)(\Gam\ten\id)(\lm(s)\ten\lm(t)).\end{align*}
By normality we see that $\Theta(\vphi_\gamma)=(\id\ten\Phi_\gamma\circ\Sigma)(\Gam\ten\id)$. Since $(\Phi_\gamma)$ is bounded, it follows that $(\Phi_\gamma)$ converges in the stable point weak* topology to the map $\Phi_M\in\mc{CB}(VN(G\times G),VN(G))$ given by
$$\Phi_M(X)=(\id\ten M)V(1\ten U)X(1\ten U)V^*, \ \ \ X\in VN(G\times G).$$
Hence, the net $(\Theta(\vphi_\gamma))$ converges weak* to a map $\Theta\in\mc{CB}(VN(G\times G))$ satisfying $$\Theta=(\id\ten\Phi_M\circ\Sigma)(\Gam\ten\id).$$
If $\Phi_M$ were a left $A(G)$-module left inverse to $\Gam$, it would follow that $\Theta=\Gam\circ\Phi_M\circ\Sigma$, hence the claim. We therefore turn to the required properties of $\Phi_M$.

First, let $\h{V}$ be the unitary in $VN(G)'\oten\LI$ given by
$$\h{V}\zeta(s,t)=\zeta(st,t)\Delta(t)^{1/2}, \ \ \ \zeta\in L^2(G\times G), \ s,t\in G.$$
Then, for $\eta\in\LT$, the compact convergence (\ref{e:uni}) entails
$$\norm{V\sigma \h{V}\sigma\eta\ten\xi_\gamma - \eta\ten\xi_\gamma}^2_{L^2(G\times G)}=\int_G\int_G |\eta(s)|^2|\beta_2(s)\xi_\gamma(t)-\xi_\gamma(t)|^2dsdt\rightarrow 0.$$
Noting that $\h{V}=\sigma(1\ten U)V(1\ten U)\sigma$, for $X\in VN(G\times G)$ we therefore have
\begin{align*}\la\Phi_M(X),\om_{\eta}\ra&=\lim_\gamma\la V(1\ten U)X(1\ten U)V^*\eta\ten\xi_\gamma,\eta\ten\xi_\gamma\ra\\
&=\lim_\gamma\la(1\ten U)V(1\ten U)X(1\ten U)V^*(1\ten U)\eta\ten\xi_\gamma,\eta\ten\xi_\gamma\ra\\
&=\lim_\gamma\la\sigma \h{V}\sigma X\sigma \h{V}^*\sigma\eta\ten\xi_\gamma,\eta\ten\xi_\gamma\ra\\
&=\lim_\gamma\la V^*XV\eta\ten\xi_\gamma,\eta\ten\xi_\gamma\ra\\
&=\la (\id\ten M)V^*XV,\om_\eta\ra.\end{align*}
Since $\eta\in\LT$ was arbitrary, by linearity we obtain
$$\Phi_M(X)=(\id\ten M)(V^*XV), \ \ \ X\in VN(G\times G),$$
from which it follows that $\Phi_M\circ\Gam=\id_{VN(G)}$. The $A(G)$-module property can be deduced from the proof of \cite[Theorem 5.5]{CN}, but we  provide the details for the convenience of the reader.

For $X\in VN(G\times G)$ and $u\in A(G)$, we have
\begin{align*}\Phi_M(u\cdot X)&=\Phi_M((\id\ten\id\ten u)(V_{23}X_{12}V_{23}^*))\\
&=(\id\ten M)(\id\ten\id\ten u)(V_{12}^*V_{23}X_{12}V_{23}^*V_{12})\\
                              &=(\id\ten M)(\id\ten\id\ten u)(V_{13}V_{23}V_{12}^*X_{12}V_{12}V_{23}^*V_{13}^*) \ \ \ \  \textnormal{(by equation (\ref{e:pentagonal}))}\\
                              &=(\id\ten u)(V(\id\ten M\ten\id)(V_{23}V_{12}^*X_{12}V_{12}V_{23}^*)V^*). \end{align*}
Denoting by $\pi:\TC\twoheadrightarrow  A(G)$ the canonical restriction map, and recalling that $M|_{VN(G)}$ is $A(G)$-invariant, for $\tau,\om\in\TC$, we have

\begin{align*}\la(\id\ten M\ten\id)(V_{23}V_{12}^*X_{12}V_{12}V_{23}^*),\tau\ten\om\ra&=\la(M\ten\id)V((\tau\ten\id)(V^*XV)\ten1)V^*,\om\ra\\
                                                                          &=\la M,\pi(\om)\cdot((\tau\ten\id)V^*XV)\ra\\
                                                                          &=\la\om,1\ra\la M,((\tau\ten\id)V^*XV)\ra\\
                                                                          &=\la M\ten\om,(\tau\ten\id)(V^*XV)\ten 1\ra\\
                                                                          &=\la(\id\ten M\ten\id)(V^*XV\ten 1),\tau\ten\om\ra\\
                                                                          &=\la\Phi_M(X)\ten 1,\tau\ten\om\ra.
                                                                          \end{align*}
Since $\tau$ and $\om$ in $\TC$ were arbitrary, it follows that
\begin{align*}\Phi_M(u \cdot X)&=(\id\ten u)(V(\id\ten M\ten\id)(V_{23}V_{12}^*A_{12}V_{12}V_{23}^*)V^*)\\
                              &=(\id\ten u)(V(\Phi_M(X)\ten 1)V^*)\\
                              &= u\cdot \Phi_M(X).\end{align*}
Our original claim is therefore established, and $\Theta(\vphi_\gamma)$ converges weak* in $\mc{CB}(VN(G\times G))$ to an $A(G\times G)$-module projection $\Theta$ from $VN(G\times G)$ onto $VN(G_\Delta)=\Gam(VN(G))$. Then
$$\vphi_{\gamma}|_{G_\Delta}\cdot u-u\rightarrow 0$$
weakly for $u\in A(G_\Delta)$, and using the fact that $\Gam(VN(G))=\{X\in VN(G\times G)\mid (\Gam\ten\id)(X)=(\id\ten\Gam)(X)\}$ \cite[Theorem 6.5]{D4}, together with the essentiality $I(G_\Delta)=\la I(G_\Delta)\cdot A(G\times G)\ra$, we also have
$$\vphi_{\gamma}\cdot v\rightarrow 0$$
weakly for $v\in I(G_\Delta)$. Passing to convex combinations, and noting that $(\vphi_\gamma)\subseteq\mc{P}_1(G\times G)$, we obtain a contractive approximate indicator for $G_\Delta$ in $\mc{P}_1(G\times G)$.
\qed\end{pf}

We conjecture that $A(G)$ is relatively $1$-biflat if and only if $G$ is QSIN, meaning $\LO$ has a bounded approximate identity $(f_\alpha)$ satisfying
$$\norm{\beta_1(s)f_\alpha - f_\alpha}_{\LO}\rightarrow0, \ \ \ s\in G.$$
By Theorem \ref{t:IA=relinj}    and \cite[Corollary 3.2]{LP1}, for any locally compact group $G$ such that $VN(G)$ is 1-injective in $\C-\mathbf{mod}$, relative 1-biflatness of $A(G)$ implies that $G$ is amenable, and therefore QSIN by \cite[Theorem 3]{LR}. Hence, the conjecture is valid for all $G$ such that $VN(G)$ is an injective von Neumann algebra, in particular, for any type I or almost connected group (cf. \cite{Pat3}). We now establish the conjecture for totally disconnected groups.

\begin{prop}\label{p:totallydisconn} Let $G$ be a totally disconnected locally compact group. Then $A(G)$ is relatively 1-biflat if and only if $G$ is QSIN.\end{prop}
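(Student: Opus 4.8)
The plan is to treat the two implications separately, the forward one being routine. If $G$ is QSIN, then $\LO$ carries a quasi-central bounded approximate identity consisting of states, and the Aristov--Runde--Spronk construction \cite{ARS} converts this into a net $(\vphi_\alpha)$ in $\mc{P}_1(G\times G)$ that is a contractive approximate indicator for $G_\Delta$; \thmref{t:biflat} then gives relative $1$-biflatness of $A(G)$. No disconnectedness is needed here, so the content lies entirely in the converse.

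For the converse I would run \thmref{t:biflat} in reverse to extract usable analytic data. Relative $1$-biflatness produces a contractive approximate indicator, and the proof of \thmref{t:biflat} simultaneously produces a net of unit vectors $\xi_\gamma\in\mc{P}$ with $\|\beta_2(s)\xi_\gamma-\xi_\gamma\|_{\LT}\to0$ uniformly on compact sets, together with the localisation property $\|w_\gamma\cdot v-v(e)w_\gamma\|_{A(G)}\to0$ for the states $w_\gamma=\om_{\xi_\gamma}|_{\LG}$. Passing to $f_\gamma:=|\xi_\gamma|^2$ yields a net of states in $\LO$ with $\|\beta_1(s)f_\gamma-f_\gamma\|_{\LO}\to0$ uniformly on compact sets. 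The remaining task is to upgrade this asymptotically conjugation-invariant net to one that is also a bounded approximate identity for $\LO$, i.e.\ to verify the QSIN condition.

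This is where total disconnectedness is brought in via van Dantzig's theorem, which furnishes a neighbourhood basis at the identity of compact open subgroups $K$; for each such $K$ the normalised indicator $e_K=\mu(K)^{-1}\mathbf{1}_K$ is an idempotent state and $(e_K)$ is a bounded approximate identity for $\LO$ as $K$ shrinks. The key structural feature I would exploit is that conjugation permutes the compact open subgroups with open stabilisers: since $K\subseteq N_G(K)$ and $K$ is open, $N_G(K)$ is open, so for any compact $C\subseteq G$ the set $\{sKs^{-1}:s\in C\}$ is finite. The strategy is to combine the quasi-central net $(f_\gamma)$ with the approximate identity $(e_K)$ along a two-parameter net, using this finiteness to control the finitely many conjugate subgroups encountered over a given compact set, and thereby to manufacture states that are simultaneously $\varepsilon$-invariant under conjugation by $C$ and approximate units modulo $K$. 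Indexing by triples $(K,C,\varepsilon)$ and diagonalising should deliver a genuine quasi-central bounded approximate identity.

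The main obstacle is the tension in this last step: a bounded approximate identity of states must concentrate at $e$, whereas quasi-centrality demands approximate invariance under the conjugation action, and for connected groups these requirements conflict because conjugation deforms arbitrarily small neighbourhoods of $e$ with no discrete scaffolding to exploit---this is precisely why the general conjecture remains open and why I restrict to the totally disconnected case. The heart of the argument is to show that the finiteness of the conjugation orbits on compact sets, together with the uniform-on-compacts quasi-centrality of $(f_\gamma)$, provides enough rigidity to perform the averaging without destroying concentration; I expect the careful bookkeeping of error terms across the finitely many conjugate subgroups, and the verification that the resulting net remains bounded, to be the technically delicate part of the proof.
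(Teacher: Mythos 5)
Your forward direction is fine and agrees with the paper. The converse, however, has a genuine gap, and it sits exactly where you defer to "careful bookkeeping": after passing to $f_\gamma=|\xi_\gamma|^2$ you retain only asymptotic conjugation-invariance and discard the $A(G)$-localisation property $\norm{w_\gamma\cdot v-v(e)w_\gamma}_{A(G)}\to0$. But a net of states in $\LO$ that is asymptotically conjugation-invariant uniformly on compact sets is precisely the data of inner amenability (cf.\ \cite[Proposition 1.13]{St2}, quoted in the proof of Theorem \ref{t:IA=relinj}), and inner amenability together with total disconnectedness does \emph{not} imply QSIN. Indeed, let $G=K\rtimes H$ where $K$ is an infinite profinite group and $H$ is a discrete property (T) group acting ergodically by automorphisms (such actions exist; cf.\ \cite{Bekka2,FS}). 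Then $G$ is totally disconnected; it is inner amenable, because the Haar state of the compact, open, normal subgroup $K$ defines a conjugation-invariant mean on $\LI$ (conjugation restricts to automorphisms of $K$, which preserve its normalized Haar measure); yet $G$ is not QSIN, since QSIN would give relative $1$-biflatness of $A(G)$ by \cite[Theorem 2.4]{ARS}, whence Proposition \ref{p:semidirect} and property (T) would yield a nonzero $H$-invariant vector in $L_0^2(K)$, contradicting ergodicity. Consequently no argument built only from the nets $(f_\gamma)$ and $(e_K)$ can close the proof: the localisation property is indispensable, not auxiliary.

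The same problem shows up concretely in your unspecified "combination" step. Convolution spreads supports, so $e_K\ast f_\gamma\ast e_K$ need not concentrate at $e$; the pointwise cut-down $\bigl(\int_K f_\gamma\bigr)^{-1}1_K f_\gamma$ requires $\int_K f_\gamma$ to stay bounded away from $0$, and quasi-centrality gives no such lower bound (invariance under $\beta_1$ says nothing about \emph{where} the mass of $f_\gamma$ sits). The paper's proof instead performs the cut-down upstairs in $A(G)$: by \cite[Lemme 4.13]{E} there are states $\vphi_H\in A(G)$ with $\mathrm{supp}(\vphi_H)\subseteq H$ for compact open subgroups $H$ (this is where van Dantzig enters), and the localisation property makes $\vphi_H\cdot u_\gamma\approx u_\gamma$, so a $3\varepsilon$-argument shows the pointwise products $\vphi_H\cdot u_\gamma$ remain asymptotically conjugation-invariant while now being supported inside $H$; an iterated-limit net then has supports shrinking to $\{e\}$. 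Crucially, uniqueness of representing vectors in the positive cone $\mc{P}$ \cite[Lemma 2.10]{H} forces $\mathrm{supp}(\xi_\alpha)\subseteq H$ as well, so the states $f_\alpha=|\xi_\alpha|^2$ in $\LO$ have supports shrinking to $\{e\}$ (hence form a bounded approximate identity) and are asymptotically conjugation-invariant, which is QSIN. Your observation that $\{sKs^{-1}:s\in C\}$ is finite for compact $C$ is correct but plays no role in this mechanism and cannot substitute for it.
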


\begin{pf} Sufficiency follows from \cite[Theorem 2.4]{ARS}, so suppose that $A(G)$ is relatively 1-biflat. Proceeding as in the proof of Theorem \ref{t:biflat}, we obtain a net of states $(u_\gamma)$ in $A(G)$ satisfying
$$\norm{v\cdot u_\gamma-v(e)u_\gamma}_{A(G)}, \ \norm{s\lhd u_\gamma-u_\gamma}_{A(G)}\rightarrow0$$
for all $v\in A(G)$ and for all $s\in G$.

Now, let $\mathcal{H}$ be a neighbourhood basis of the identity consisting of compact open subgroups. By \cite[Lemme 4.13]{E} for each $H\in\mc{H}$ there exists a state $\vphi_H\in A(G)$ satisfying $\mathrm{supp}(\vphi_H)\subseteq H^2\subseteq H$ and
$$\norm{\vphi_H\cdot v-v(e)\vphi_H}_{A(G)}\rightarrow 0, \ \ \ v\in A(G).$$
For each $H\in\mc{H}$, a standard $3\varepsilon$-argument shows
$$\norm{s\lhd(\vphi_H\cdot u_\gamma)-\vphi_H\cdot u_\gamma}_{A(G)}\rightarrow0, \ \ \ s\in G.$$
%\begin{align*}\norm{s\lhd(\vphi_H\cdot u_\gamma)-\vphi_H\cdot u_\gamma}&\leq\norm{(s\lhd\vphi_H)\cdot(s\lhd u_\gamma)-(s\lhd\vphi_H)\cdot u_\gamma}+ \norm{(s\lhd\vphi_H)\cdot u_\gamma -\vphi_H\cdot u_\gamma}\\
%&\leq\norm{s\lhd u_\gamma- u_\gamma}+\norm{(s\lhd\vphi_H)\cdot u_\gamma- u_\gamma}+\norm{ u_\gamma-\vphi_H\cdot u_\gamma}\\
%&\rightarrow0
%\end{align*}
Denoting the index set of $(u_\gamma)$ by $\mc{C}$, we form the product $\mc{I}:=\mc{H}\times\mc{C}^{\mc{H}}$. For each $\alpha=(H,(\gamma_H)_{H\in\mc{H}})\in\mc{I}$, letting $u_\alpha:=\vphi_H\cdot u_{\gamma(H)}$, we obtain a net of states in $A(G)$ satisfying the iterated convergence
$$\lim_{\alpha\in\mc{I}}\norm{s\lhd u_\alpha- u_\alpha}_{A(G)}=\lim_{H\in\mc{H}}\lim_{\gamma\in\mc{C}}\norm{s\lhd\vphi_H\cdot u_{\gamma}-\vphi_H\cdot u_{\gamma}}_{A(G)}=0$$
for all $s\in G$ by \cite[pg. 69]{Kelley}. Moreover, $\mathrm{supp}(u_\alpha)\rightarrow\{e\}$, in the sense that for every neighbourhood $U$ of the identity, there exists $\alpha_U$ such that $\mathrm{supp}(u_\alpha)\subseteq U$ for $\alpha\geq\alpha_U$.

Let ($\xi_\alpha$) be the unique representing vectors from $\mc{P}$ for the net $(u_\alpha)$. For each $\alpha=(H,(\gamma_H)_{H\in\mc{H}})\in\mc{I}$, $u_\alpha$ is supported in the open subgroup $H$, i.e., $u_\alpha\in A(H)\subseteq A(G)$. Under the canonical subspace inclusion $L^2(H)\hookrightarrow L^2(G)$ we have $\mc{P}_H=\overline{\{f\ast Jf\mid f\in C_c(H)\}}\subseteq \mc{P}_G$, so by uniqueness of representing vectors \cite[Lemma 2.10]{H}, we may assume $\mathrm{supp}(\xi_\alpha)\subseteq H$.

Applying Haagerup's Powers--St\o rmer inequality \cite[Lemma 2.10]{H} once again, we obtain
\begin{equation*}\norm{\om_{\beta_2(s)\xi_\alpha}-\om_{\xi_\alpha}}^2_{\LO}\leq 4\norm{\beta_2(s)\xi_\alpha-\xi_\alpha}^2_{\LT}\leq 4\norm{s\lhd u_\alpha-u_\alpha}_{A(G)}\rightarrow0, \ \ \ s\in G.\end{equation*}
Letting $f_\alpha:=|\xi_\alpha|^2$, we obtain a net of states in $\LO$ satisfying
$$\norm{\beta_1(s)f_\alpha - f_\alpha}_{\LO}=\norm{\om_{\beta_2(s)\xi_\alpha}-\om_{\xi_\alpha}}_{\LO}\rightarrow0, \ \ \ s\in G,$$
and $\mathrm{supp}(f_\alpha)\rightarrow\{e\}$. Hence, $G$ is QSIN.
\qed\end{pf}

For the semidirect product of an infinite compact group $K$ by a discrete group
$H$, we now show that relative $1$-biflatness of $A(K\rtimes H)$
entails that the unitary representation \[
\pi_{K}:H\rightarrow\mathcal{B}(L_0^2(K)):h\mapsto[\xi\mapsto h\cdot\xi]\]
weakly contains the trivial representation. Here, $L_0^2(K)=\{ \xi\in L^2(K):\int_{K}\xi=0\} $
and $h\cdot\xi(k)=\xi(h^{-1}kh)$
for $h\in H$ and $\xi\in L^2(K)$, where $h^{-1}kh$
is the product in $K\rtimes H$, i.e. the action of $h^{-1}$ on $k$.
If, moreover, the action of $H$ on $K$ is ergodic, we show that
the Haar integral on $K$ is not the unique $H$-invariant mean on
$L^{\infty}(K)$. \emph{Ergodicity} of the $H$-action
on $K$ is the assertion that if $E\subseteq K$ is Borel with $E\triangle h\cdot E$
null for all $h\in H$, then $E$ must be null or co-null, and is equivalent
to the non-existence of normal $H$-invariant means on $L^{\infty}(K)$
other than $1_{K}$.
\begin{prop}\label{p:semidirect}
Let $K\rtimes H$ be the semidirect product of an infinite compact group $K$
by a discrete group $H$. If $A(K\rtimes H)$ is relatively
$1$-biflat, then $\pi_{K}$ weakly contains the trivial representation.\end{prop}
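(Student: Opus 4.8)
The plan is to follow the machinery developed in the proof of \thmref{t:biflat} and specialize it to the semidirect product $G = K \rtimes H$. Relative $1$-biflatness of $A(G)$ produces, as in that proof, a net of states $(u_\gamma)$ in $A(G)$ satisfying the conjugation-invariance
$$\norm{s\lhd u_\gamma - u_\gamma}_{A(G)}\rightarrow 0, \ \ \ s\in G,$$
together with the $A(G)$-module approximate-identity property $\norm{v\cdot u_\gamma - v(e)u_\gamma}_{A(G)}\rightarrow 0$. Via Haagerup's representing vectors $\xi_\gamma\in\mc{P}$ and the Powers--St\o rmer inequality \cite[Lemma 2.10]{H}, this upgrades to states $f_\gamma = |\xi_\gamma|^2$ in $\LO$ with $\norm{\beta_1(s)f_\gamma - f_\gamma}_{\LO}\rightarrow 0$. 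The key structural input is the semidirect product decomposition: since $K$ is compact and open-normal-complemented by $H$, I expect to be able to arrange, by averaging over $K$, that the relevant invariance is detected on the compact factor $K$ while the conjugation action of $H$ acts through $\pi_K$.

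The heart of the argument is to transfer this $\LO$-level conjugation invariance into a statement that $\pi_K$ weakly contains the trivial representation on $L^2_0(K)$. First I would integrate the states $f_\gamma$ against the Haar measure of the compact group $K$ to extract their ``$K$-components'', obtaining (after normalization) functions or vectors on $K$ whose conjugation behaviour under $H$ is governed precisely by the action $h\cdot\xi(k)=\xi(h^{-1}kh)$ defining $\pi_K$. The conjugation invariance $\norm{\beta_1(h)f_\gamma - f_\gamma}\rightarrow 0$ for $s = h\in H$ should translate, after projecting onto $L^2(K)$ and taking square roots, into a net of unit vectors $\zeta_\gamma\in L^2(K)$ that are asymptotically $\pi_K$-invariant: $\norm{\pi_K(h)\zeta_\gamma - \zeta_\gamma}_{L^2(K)}\rightarrow 0$ for each $h\in H$. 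The constant function $1_K$ is of course genuinely invariant, so the real content is to ensure the approximately-invariant vectors have a nontrivial component orthogonal to the constants, i.e. in $L^2_0(K)$; this is where the hypothesis that $K$ is \emph{infinite} must enter, presumably through a support or spreading condition on the $f_\gamma$ forcing $\zeta_\gamma$ to not collapse onto $\C 1_K$.

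The main obstacle I anticipate is precisely this last point: ruling out that the asymptotically invariant vectors live entirely in $\C 1_K$, since $1_K$ is trivially fixed and would make the weak-containment conclusion vacuous. I would confront this by exploiting the approximate-identity structure of $(u_\gamma)$ — which forces the supports to concentrate near the identity in a way incompatible with $\zeta_\gamma\to 1_K$ when $K$ is infinite — or by comparing the $A(G)$-norm invariance against the trivial representation directly and noting that weak containment of the trivial representation in $\pi_K$ is equivalent to the existence of a net in $L^2(K)$ (not necessarily orthogonal to constants) that is almost invariant yet whose invariant-projection vanishes in the limit. In practice I expect the cleanest route is to phrase weak containment via almost-invariant vectors and to show that if $1_K$ were \emph{not} weakly contained in $\pi_K$ (equivalently $\pi_K$ has a spectral gap), then the almost-invariant states $f_\gamma$ would be forced to converge to the Haar state, contradicting the concentration of their supports near $e$ inherited from the approximate identity.
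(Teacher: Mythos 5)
Your overall strategy---extract almost-conjugation-invariant states from relative $1$-biflatness as in Theorem \ref{t:biflat}, localize to the open subgroup $K$, decompose against the constants, and rule out collapse onto $\C 1_K$ using the infinitude of $K$---is exactly the paper's. The gap lies in the mechanism you propose for the crucial non-collapse step. Your contradiction requires the supports of the $\LO$-densities $f_\gamma=|\xi_\gamma|^2$ (equivalently, of the representing vectors $\xi_\gamma$) to concentrate near $e$, but nothing in the construction yields this. What the approximate-identity property of $(u_\gamma)$ gives, after multiplying by bump states as in Proposition \ref{p:totallydisconn}, is support concentration of the \emph{coefficient functions} $u_\gamma=\omega_{\xi_\gamma}$; and what openness of $K$ plus uniqueness of representing vectors in $\mc{P}$ \cite[Lemma 2.10]{H} gives is only $\mathrm{supp}(\xi_\gamma)\subseteq K$, not concentration near $e$ within $K$. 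Small support of $\omega_\xi$ does not imply small support of $\xi$: already for abelian $K$ the positive representing vector of a positive-definite function is obtained by taking a square root on the dual side, which destroys support localization. Worse, if you could produce asymptotically conjugation-invariant states in $\LO$ whose supports shrink to $\{e\}$, you would essentially have proved that $G$ is QSIN---precisely the conjecture the paper leaves open---so this step cannot be expected to follow from the soft arguments you describe.

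The repair (and the paper's actual argument) is to run the contradiction at the level of coefficient functions, where support control \emph{is} available. With $\mathrm{supp}(\xi_\alpha)\subseteq K$ and $\xi_\alpha=\xi_\alpha^0+c_\alpha 1_K$ in $L_0^2(K)\oplus_2\C 1_K$, the cross terms in $\omega_{\xi_\alpha}$ vanish because $\xi_\alpha^0\perp 1_K$, so $\omega_{\xi_\alpha}(k)=\omega_{\xi_\alpha^0}(k)+|c_\alpha|^2$ for $k\in K$. If $\norm{\xi_\alpha^0}_{L_0^2(K)}\rightarrow 0$, choose a neighbourhood $U$ of $e$ with $|K\setminus U|>0$ (possible since $K$ is infinite) and $\alpha$ with $|c_\alpha|^2>\tfrac{1}{2}$ and $\mathrm{supp}(\omega_{\xi_\alpha})\subseteq U$; then $0=\omega_{\xi_\alpha^0}(k)+|c_\alpha|^2$ for all $k\in K\setminus U$, and integrating over $K\setminus U$ contradicts $\norm{\omega_{\xi_\alpha^0}}_{A(K)}\leq\norm{\xi_\alpha^0}^2\rightarrow 0$. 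This uses only the two pieces of support information that are genuinely obtainable (coefficient-function concentration, and vector support inside the open subgroup $K$); with that substitution, the remainder of your outline---normalizing the $\xi_\alpha^0$ to get almost $\pi_K$-invariant unit vectors in $L_0^2(K)$---goes through.
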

\begin{pf}
Let $G$ denote $K\rtimes H$. As in the proof of Theorem \ref{t:biflat},
relative $1$-biflatness of $A(G)$ yields a net of states
$(\omega_{\xi_{\alpha}})$ in $A(G)$ with $\xi_{\alpha}\in\mathcal{P}_{G}$
satisfying\[
\Vert v\cdot\omega_{\xi_{\alpha}}-v(e)\omega_{\xi_{\alpha}}\Vert _{A(G)}, \ \Vert s\vartriangleleft\omega_{\xi_{\alpha}}-\omega_{\xi_{\alpha}}\Vert _{A(G)}\rightarrow0, \ \ \ v\in A(G), \ s\in G.\]
Arguing as in the proof of Proposition \ref{p:totallydisconn}, we may assume $\mbox{supp}(\omega_{\xi_{\alpha}})\rightarrow\{ e\} $
and, since $K$ is an open subgroup of $G$, we may identify $A(K)$
with a subspace of $A(G)$ and further assume that $\mbox{supp}(\xi_{\alpha})\subseteq K$.
Viewing $L^2(K)$ as a subspace of
$L^{2}(G)$ via extension by zero, we have $\beta_{2}^{G}(h)\xi=h\cdot\xi$ for $\xi\in L^2(K)$ and $h\in H$
by unimodularity of $G$, and, noting once again that $\beta_{2}^{G}(G)\mathcal{P}_{G}\subseteq\mathcal{P}_{G}$,
\cite[Lemma 2.10]{H} implies\[
\Vert h\cdot\xi_{\alpha}-\xi_{\alpha}\Vert _{L^2(K)}^{2}=\norm{ \beta_{2}^{G}(h)\xi_{\alpha}-\xi_{\alpha}} _{L^{2}(G)}^{2}\leq\norm{ \omega_{\beta_{2}^{G}(h)\xi_{\alpha}}-\omega_{\xi_{\alpha}}} _{A(G)}=\Vert h\vartriangleleft\omega_{\xi_{\alpha}}-\omega_{\xi_{\alpha}}\Vert _{A(G)}\rightarrow0\]
for all $h\in H$. Let $\xi_{\alpha}=\xi_{\alpha}^{0}+c_{\alpha}1_{K}$
correspond to the decomposition $L^2(K)=L_0^2(K)\oplus_{2}\mathbb{C}1_{K}$,
so that $1=\Vert \xi_{\alpha}^{0}\Vert _{L_0^{2}(K)}^{2}+|c_{\alpha}|^{2}$
and $\Vert h\cdot\xi_{\alpha}^{0}-\xi_{\alpha}^{0}\Vert _{L_0^{2}(K)}\rightarrow0$ for all $h\in H$.
Fix a neighbourhood $U$ of the identity in $K$ with $|K\setminus U|>0$.
If it were the case that $\Vert \xi_{\alpha}^{0}\Vert _{L_0^{2}(K)}\rightarrow0$,
then, for $\alpha$ large enough that $|c_{\alpha}|^{2}>\frac{1}{2}$
and $\mbox{supp}(\omega_{\xi_{\alpha}})\subseteq U$, we
have for $k\in K\setminus U$ that \[
0=\omega_{\xi_{\alpha}}(k)=\omega_{\xi_{\alpha}^{0}}(k)+\omega_{\xi_{\alpha}^{0},c_{\alpha}1_{K}}(k)+\omega_{c_{\alpha}1_{K},\xi_{\alpha}^{0}}(k)+|c_{\alpha}|^{2}=\omega_{\xi_{\alpha}^{0}}(k)+|c_{\alpha}|^{2}\]
because $\xi_{\alpha}^{0}\in L_0^2(K)$, whence\[
-\frac{1}{2}|K\setminus U|>\int_{K\setminus U}\omega_{\xi_{\alpha}^{0}}(k)dk=\langle \omega_{\xi_{\alpha}^{0}},\lambda_{K}(1_{K\setminus U})\rangle _{A(K),VN(K)}\rightarrow0,\]
a contradiction. Therefore, passing to a subnet if necessary, we may
assume $\Vert \xi_{\alpha}^{0}\Vert _{L_0^{2}(K)}$ is bounded away
from zero, in which case the vectors $\xi_{\alpha}^{0}$ may be normalized
while retaining the property that $\Vert h\cdot\xi^0_{\alpha}-\xi^0_{\alpha}\Vert _{L_0^{2}(K)}\rightarrow0$
for all $h\in H$. Thus $\pi_{K}$ weakly contains the trivial representation.\qed\end{pf}

%For an infinite compact group $K$, property (A) of \cite{CLR} asserts that the representation $\pi_{K}$ associated
%to the left translation action of $K_{d}$ does not weakly contain
%the trivial representation. Consequently, $A\left(K\rtimes K_{d}\right)$
%is not relatively $1$-biflat for $K$ with property (A).

A locally compact group $G$ is said to have \emph{Kazhdan's
property (T)} if whenever a strongly continuous unitary representation
of $G$ weakly contains the trivial representation it must contain
the trivial representation.

\begin{cor}
Let $K\rtimes H$ be the semidirect product of an infinite compact group $K$
by a discrete group $H$ such that the action of $H$ on $K$ is ergodic.
If $A(K\rtimes H)$ is relatively $1$-biflat, then $H$
does not have Kazhdan's property (T).\end{cor}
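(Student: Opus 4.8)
The plan is to deduce this corollary directly from Proposition~\ref{p:semidirect} together with the defining feature of property (T). Assume $A(K\rtimes H)$ is relatively $1$-biflat and suppose, towards a contradiction, that $H$ has Kazhdan's property (T). Proposition~\ref{p:semidirect} gives that the representation $\pi_K:H\rightarrow\mc{B}(L_0^2(K))$ weakly contains the trivial representation of $H$. Property (T) then upgrades weak containment to genuine containment, so $\pi_K$ \emph{contains} the trivial representation. This produces a nonzero $H$-invariant vector $\xi\in L_0^2(K)$, that is, a nonzero $\xi$ with $\int_K\xi=0$ satisfying $\xi(h^{-1}kh)=\xi(k)$ for almost every $k\in K$ and every $h\in H$.

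The remaining task is to convert such an invariant vector into a violation of ergodicity. First I would reduce to the case where $\xi$ is real-valued: the conjugation action commutes with complex conjugation, so both the real and imaginary parts of $\xi$ lie in $L_0^2(K)$ and remain $H$-invariant, and at least one of them is nonzero. With $\xi$ real, nonzero, and of mean zero, it is non-constant, so there is a threshold $t\in\R$ for which the super-level set $E=\{k\in K:\xi(k)>t\}$ is neither null nor co-null. The pointwise relation $\xi(h^{-1}kh)=\xi(k)$ a.e. yields $1_{hEh^{-1}}=1_E$ a.e., equivalently $E\triangle(h\cdot E)$ is null for every $h\in H$, where $h\cdot E=hEh^{-1}$. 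Such a set $E$ directly contradicts ergodicity of the $H$-action on $K$, and this contradiction shows that $H$ cannot have property (T).

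I expect the substantive content of the result to reside entirely in Proposition~\ref{p:semidirect}; the present argument is a short deduction, and the only point requiring care is the passage from an invariant $L^2$-vector to a nontrivial invariant Borel set via a level set. As a cross-check, one could instead invoke the stated equivalence between ergodicity and the absence of normal $H$-invariant means on $L^{\infty}(K)$ other than $1_K$: the invariant set $E$ above defines the normal $H$-invariant mean $f\mapsto\frac{1}{|E|}\int_E f$ on $L^\infty(K)$, which is not the Haar integral, again contradicting ergodicity. Either route closes the argument, so I anticipate no genuine obstacle beyond correctly matching the definitions already fixed in the excerpt.
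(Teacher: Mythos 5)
Your proposal is correct and follows exactly the paper's argument: Proposition~\ref{p:semidirect} gives weak containment of the trivial representation, property (T) upgrades this to a nonzero $H$-invariant vector in $L_0^2(K)$, and this contradicts ergodicity. The only difference is that you spell out (correctly, via real parts and level sets) the passage from an invariant mean-zero $L^2$-function to a non-trivial almost-invariant Borel set, a step the paper treats as immediate.
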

\begin{pf}
If $H$ had Kazhdan's property (T), then $\pi_{K}$
would contain the trivial representation and we would obtain a nonzero
vector $\xi\in L_0^2(K)$ such that $h\cdot\xi=\xi$
for all $h\in H$, contradicting the ergodicity of the $H$-action
on $K$.
\qed\end{pf}

This shows, for example, that if $K$ is an infinite
compact group with an ergodic action of $SL(n,\mathbb{Z})$
by automorphisms and $n\geq3$, then the Fourier algebra of $K\rtimes SL(n,\mathbb{Z})$
is not relatively 1-biflat.

%If an infinite compact group $K$ contains
%a dense subgroup $H$ such that $H_{d}$ has Kazhdan's
%property (T), then, since the left translation action of $H_{d}$
%on $K$ is ergodic, the Fourier algebra of $K\rtimes H_{d}$ is not
%relatively $1$-biflat. The groups $SO\left(n,\mathbb{R}\right)$
%for $n\geq5$ and more generally real compact simple Lie
%groups not locally isomorphic to $SO\left(n,\mathbb{R}\right)$ for $n=2,3,$ or 4 are examples of such groups \cite[Theorem 3.4.4]{Lub}.

The QSIN condition on a locally compact group $G$ is equivalent to
the existence of a conjugation invariant mean on $L^{\infty}(G)$
extending evaluation at the identity on $C_{0}(G)$. In
\cite{LR} it is established that for $n\geq2$ the group $\mathbb{T}^{n}\rtimes SL(n,\mathbb{Z})$
fails to be QSIN by appealing to the fact that the Haar integral on
$\mathbb{T}^{n}$ is the unique mean on $L^{\infty}(\mathbb{T}^{n})$
that is invariant under the $SL(n,\mathbb{Z})$-action.
Indeed, the restriction to $L^{\infty}(\mathbb{T}^{n})$
of any conjugation invariant mean on $L^{\infty}(\mathbb{T}^{n}\rtimes SL(n,\mathbb{Z}))$
is clearly invariant under the action of $SL(n,\mathbb{Z})$.
For semidirect products associated to ergodic actions as above, we
have the following.

\begin{cor}
Let $K\rtimes H$ be the semidirect product of an infinite compact group $K$
by a discrete group $H$ such that the action of $H$ on $K$ is ergodic.
If $A(K\rtimes H)$ is relatively $1$-biflat, then there
is an $H$-invariant mean on $L^{\infty}(K)$ distinct
from the Haar integral on $K$.\end{cor}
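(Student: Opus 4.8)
The plan is to combine the previous proposition with a compactness and weak-* limit argument. By Proposition~\ref{p:semidirect}, relative $1$-biflatness of $A(K\rtimes H)$ guarantees that $\pi_K$ weakly contains the trivial representation, so there exists a net of unit vectors $(\eta_\alpha)$ in $L_0^2(K)$ with $\norm{h\cdot\eta_\alpha-\eta_\alpha}_{L_0^2(K)}\to0$ for all $h\in H$. First I would use these vectors to manufacture an $H$-invariant mean on $L^\infty(K)$ that differs from the Haar integral. The natural device is to form the vector states $\om_{\eta_\alpha}$ on $L^\infty(K)$, defined by $\la\om_{\eta_\alpha},g\ra=\int_K g(k)|\eta_\alpha(k)|^2\,dk$ for $g\in L^\infty(K)$, and to pass to a weak-* cluster point $m\in L^\infty(K)^*$. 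Since $L^\infty(K)^*_{+,1}$ is weak-* compact, such a cluster point exists.

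Next I would check that $m$ is $H$-invariant. The asymptotic $H$-invariance $\norm{h\cdot\eta_\alpha-\eta_\alpha}\to0$ transfers to the states via the Cauchy--Schwarz-type estimate $|\la\om_{h\cdot\eta_\alpha}-\om_{\eta_\alpha},g\ra|\leq 2\norm{g}_\infty\norm{h\cdot\eta_\alpha-\eta_\alpha}_{L^2(K)}$, which tends to $0$; since the $H$-action on $L^\infty(K)$ is the adjoint of the action on vectors, this yields $\la m,h\cdot g\ra=\la m,g\ra$ for all $h\in H$ and $g\in L^\infty(K)$. Thus $m$ is an $H$-invariant mean.

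The crucial remaining step, and the main obstacle, is to verify that $m$ is genuinely distinct from the Haar integral $\int_K$. Here the point is that the $\eta_\alpha$ live in $L_0^2(K)$, i.e.\ they have mean zero, so they are ``orthogonal'' to the constant function realizing the Haar integral; this should force $m$ to detect more than the Haar measure. Concretely, I would argue by contradiction: if $m=\int_K$ were the Haar integral, then $\om_{\eta_\alpha}\to\int_K$ weak-*, meaning the probability densities $|\eta_\alpha|^2$ converge weak-* to the constant density $1_K$. I would then test against a carefully chosen $g\in L^\infty(K)$ built from the spectral decomposition of the representation $\pi_K$ to contradict the fact that the $\eta_\alpha$ remain in the orthogonal complement of the constants. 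Equivalently, recalling from the paragraph preceding the corollary that ergodicity is precisely the statement that the Haar integral is the unique \emph{normal} $H$-invariant mean, I would instead aim to produce a \emph{singular} invariant mean: the almost-invariant unit vectors $\eta_\alpha$ in $L_0^2(K)$ cannot converge to the constants in norm (they are orthogonal to them), so any weak-* limit of $(\om_{\eta_\alpha})$ that remains invariant must have a nonzero singular part, and hence cannot equal the normal functional $\int_K$. This is where one must be careful to extract a mean that is provably non-normal rather than merely a second normal invariant mean, which ergodicity would forbid.

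Assuming any result stated earlier in the excerpt, the cleanest packaging uses Proposition~\ref{p:semidirect} as a black box and devotes the proof to the mean-theoretic argument above. The delicate quantitative estimate needed to separate $m$ from $\int_K$ is the part I expect to require the most care, since it is where the mean-zero condition on $L_0^2(K)$ must be leveraged against the possibility that the vector states collapse onto Haar measure.
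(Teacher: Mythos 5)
There is a genuine gap, and it sits exactly where you predicted: the distinctness of the cluster point $m$ from the Haar integral. The heuristic that mean-zero vectors are ``orthogonal to the constants'' and therefore force $m\neq\int_K$ is false: orthogonality of $\eta_\alpha$ to $1_K$ in $L^2(K)$ does not prevent the densities $|\eta_\alpha|^2$ from converging weak-* to the constant density. For instance, on $K=\mathbb{T}$ the unit vectors $\eta_n(t)=\sqrt{2}\cos(2\pi nt)$ lie in $L_0^2(\mathbb{T})$, yet $|\eta_n|^2=1+\cos(4\pi nt)\rightarrow 1$ weak-*, so every weak-* cluster point of $(\om_{\eta_n})$ \emph{is} the Haar integral. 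For the same reason your fallback claim---that an invariant weak-* limit of $(\om_{\eta_\alpha})$ ``must have a nonzero singular part'' because the $\eta_\alpha$ cannot converge to the constants in norm---is invalid: weak-* convergence to the normal state $\int_K$ does not require norm convergence of the densities. So the weak-* limit construction, as proposed, can simply collapse onto Haar measure, and nothing in the proposal rules this out.

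The paper avoids this problem by not proving the implication at all: it cites \cite[Theorem 1.6]{FS} (Furman--Shalom, going back to Rosenblatt and Schmidt), which says precisely that if $\pi_K$, viewed on the \emph{real} Hilbert space $L_0^2(K,\mathbb{R})$, weakly contains the trivial representation, then $L^{\infty}(K)$ has an $H$-invariant mean distinct from the Haar integral. The proof of that theorem is not a weak-* limit of vector states; it converts real-valued almost invariant vectors into asymptotically invariant \emph{sets} via a level-set (Namioka-type) argument, the mean-zero condition guaranteeing the sets have measure bounded away from $0$ and $1$, and only then builds the (necessarily non-normal) invariant mean from those sets. Note also that real-valuedness is essential here---a complex unit vector can carry all of its invariance in its phase, as vectors with $|\eta|\equiv 1$ show---which is why the paper's one remaining task is to make the almost invariant vectors from Proposition \ref{p:semidirect} real-valued (replacing $\om_{\xi_\alpha}$ by $\om_{\xi_\alpha}\overline{\om_{\xi_\alpha}}=\om_{\xi_\alpha'}$ with $\xi_\alpha'\in\mathcal{P}_G$ real-valued by uniqueness); your spectral-decomposition sketch does not address this point either. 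Your proof becomes correct if, after Proposition \ref{p:semidirect}, you produce real-valued almost invariant vectors and then invoke \cite[Theorem 1.6]{FS} rather than attempting the mean-theoretic step directly.
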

\begin{pf}
By \cite[Theorem 1.6]{FS}, $L^{\infty}(K)$ admits an
$H$-invariant mean distinct from the Haar measure when $\pi_{K}$,
considered as a representation on $L_{0}^{2}(K,\mathbb{R})$,
weakly contains the trivial representation. We may assure that the
almost invariant vectors for $\pi_{K}$ produced in Proposition \ref{p:semidirect}
 are real valued by replacing the states $\omega_{\xi_{\alpha}}$ with
$\omega_{\xi_{\alpha}}\overline{\omega_{\xi_{\alpha}}}$, in which
case we have $\omega_{\xi_{\alpha}}\overline{\omega_{\xi_{\alpha}}}=\omega_{\xi_{\alpha}^{\prime}}$
for $\xi_{\alpha}^{\prime}\in\mathcal{P}_{G}$ that are then real-valued
by uniqueness.
\qed\end{pf}

Since the $SL(2,\mathbb{Z})$-action on $\mathbb{T}^{2}$
is ergodic, this confirms that $A(\mathbb{T}^{2}\rtimes SL(2,\mathbb{Z}))$
fails to be relatively 1-biflat. Note, however, that $\mathbb{T}^{2}\rtimes SL(2,\mathbb{Z})$ is an IN group, and hence $A(\mathbb{T}^{2}\rtimes SL(2,\mathbb{Z}))$ is relatively 1-flat by Theorem \ref{t:IA=relinj}. More examples of groups $H$ and $K$ and conditions on these pairs for which there
is a unique $H$-invariant mean on $L^{\infty}(K)$ may be found in \cite{Bekka2} and \cite{FS}.

\section{Operator amenability of $A_{cb}(G)$}

For a locally compact group $G$, let $M_{cb}A(G)$ denote
the completely bounded multiplier algebra of $A(G)$ and
$A_{cb}(G)$ the norm closure of $A(G)$ in
$M_{cb}A(G)$. Given a closed subgroup $H$ of $G$, we
may consider approximate indicators for $H$ consisting of completely
bounded multipliers by replacing $B(G)$ with $M_{cb}A(G)$
in the definition of Section \ref{s:relbiflat}. The existence of an approximate indicator
for $G_{\Delta}$ in the larger algebra $M_{cb}A(G\times G)$
still yields relative biflatness of $A(G)$, the proof
of \cite[Proposition 2.3]{ARS} carrying over mutatis mutandis.

For the algebra $A_{cb}(G)$, the existence
of a bounded approximate identity is equivalent to weak amenability
of $G$ \cite{Fo} and it was suggested in \cite{FRS} that $A_{cb}(G)$
may be operator amenable exactly when $G$ is weakly amenable. The
following proposition, in combination with Corollary \ref{c:F_2},
yields a large class of counter-examples.
\begin{prop}
Let $G$ be a locally compact group such that $A_{cb}(G)$
is operator amenable. Then $G_{\Delta}$ has
a bounded approximate indicator in $A_{cb}(G\times G)$.\end{prop}
\begin{pf}
Write $\Delta:A_{cb}(G)\widehat{\otimes}A_{cb}(G)\rightarrow A_{cb}(G)$
for the product map, $r:A_{cb}(G\times G)\rightarrow A_{cb}(G)$
for restriction to the diagonal $G_{\Delta}$ in $G\times G$, and
$\Lambda:A_{cb}(G)\widehat{\otimes}A_{cb}(G)\rightarrow A_{cb}(G\times G)$
for the complete contraction defined on elementary tensors by $\Lambda(u\otimes v)=u\times v$,
so that $\Delta=r\Lambda$. Let $(X_{\alpha})$ be an approximate
diagonal for $A_{cb}(G)$ of bound $C$ and set $m_{\alpha}=\Lambda(X_{\alpha})$.
We show that the net $(m_{\alpha})$ is an approximate
indicator for $G_{\Delta}$. Let $u\in A(G)$ have compact
support and choose $v\in A(G)$ with $v\equiv1$ on $\mbox{supp}(u)$
\cite[Lemme 3.2]{E}, so that $u=uv$ and \[
\Vert ur(m_{\alpha})-u\Vert _{A(G)}=\Vert u\Delta(X_{\alpha})-u\Vert _{A(G)}\leq\Vert u\Vert _{A(G)}\Vert v\Delta(X_{\alpha})-v\Vert _{A_{cb}(G)}\rightarrow0.\]
As $A(G)$ is Tauberian and the net $(r(m_{\alpha}))$
is bounded in $\Vert \cdot\Vert _{A_{cb}(G)}$,
a routine estimate shows that the above holds for all $u\in A(G)$.

We claim that the elements of $I(G_{\Delta})$ of the form
$(a\times1_{G}-1_{G}\times a)v$ for $a\in A(G)$
and $v\in A(G\times G)$ have dense span. Recall that $A(G)$
is self-induced \cite{D4}, in particular \[
\ker\Delta_{A(G)}=\langle ab\otimes c-a\otimes bc:a,b,c\in A(G)\rangle ,\]
and that the map $a\otimes b\mapsto a\times b$ induces a completely
isometric isomorphism $A(G)\widehat{\otimes}A(G)\rightarrow A(G\times G)$
taking $\ker\Delta_{A(G)}$ onto $I(G_{\Delta})$,
from which it follows that \[
I(G_{\Delta})=\langle ab\times c-a\times bc:a,b,c\in A(G)\rangle .\]
Since $\{ a\times c:a,c\in A(G)\} $ has dense
span in $A(G\times G)$, \begin{eqnarray*}
I(G_{\Delta}) & = & \langle b\cdot(a\times c)-(a\times c)\cdot b:a,b,c\in A(G)\rangle \\
 & = & \langle b\cdot v-v\cdot b:b\in A(G)\mbox{ and }v\in A(G\times G)\rangle \\
 & = & \langle (b\times1_{G}-1_{G}\times b)v:b\in A(G)\mbox{ and }v\in A(G\times G)\rangle .\end{eqnarray*}
For such elements of $I(G_{\Delta})$,\begin{eqnarray*}
\Vert (b\times1_{G}-1_{G}\times b)vm_{\alpha}\Vert _{A(G\times G)} & \leq & \Vert v\Vert _{A(G\times G)}\Vert b\cdot m_{\alpha}-m_{\alpha}\cdot b\Vert _{A_{cb}(G\times G)}\\
 & \leq & \Vert v\Vert _{A(G\times G)}\Vert b\cdot X_{\alpha}-X_{\alpha}\cdot b\Vert _{A_{cb}(G)\widehat{\otimes}A_{cb}(G)}\rightarrow0,\end{eqnarray*}
where the second inequality uses that $\Lambda$ is a contractive $A(G)$-bimodule
map. The density claim above and the boundedness of $(m_{\alpha})$ imply that $\Vert um_{\alpha}\Vert _{A(G\times G)}\rightarrow0$
for all $u\in I(G_{\Delta})$.\qed\end{pf}
\begin{cor}\label{c:Acbopamen}
Let $G$ be a locally compact group containing $\mathbb{F}_{2}$ as
a closed subgroup and for which $VN(G)$ is $1$-injective
in $\mathbb{C}-\mathbf{mod}$. Then $A_{cb}(G)$ is not
operator amenable.\end{cor}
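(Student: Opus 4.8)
The plan is to argue by contradiction, using Theorem~\ref{t:OA} to convert operator amenability of $\Acb$ into relative biflatness of $A(G)$, and then invoking Corollary~\ref{c:F_2} to derive a contradiction. So suppose $\Acb$ is operator amenable. By the implication $(2)\Rightarrow(3)$ of Theorem~\ref{t:OA}, $G$ is weakly amenable and $G_\Delta$ admits a bounded approximate indicator $(\vphi_\alpha)$ in $A_{cb}(G\times G)$.

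Next I would transport this approximate indicator into the larger multiplier algebra. Since the inclusion $A_{cb}(G\times G)\hookrightarrow M_{cb}A(G\times G)$ is completely isometric, the net $(\vphi_\alpha)$ remains bounded with the same bound, and its two defining properties as an approximate indicator for $G_\Delta$ are preserved; hence $(\vphi_\alpha)$ is a bounded approximate indicator for $G_\Delta$ in $M_{cb}A(G\times G)$. Applying the $M_{cb}A(G\times G)$-analogue of \cite[Proposition 2.3]{ARS} recorded at the start of this section, I conclude that $A(G)$ is relatively $C$-biflat for some $C\geq1$.

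Finally, relative $C$-biflatness of $A(G)$ implies relative $C$-flatness of $A(G)$ in $\mathbf{mod}-A(G)$, as used in the discussion following Corollary~\ref{c:F_2}; equivalently, $\LG=A(G)^*$ is relatively $C$-injective in $A(G)-\mathbf{mod}$. But $G$ contains $\F_2$ as a closed subgroup and $\LG$ is $1$-injective in $\C-\mathbf{mod}$, so Corollary~\ref{c:F_2} asserts that $\LG$ is \emph{not} relatively $C$-injective in $A(G)-\mathbf{mod}$ for any $C\geq1$. This is the desired contradiction, and therefore $\Acb$ cannot be operator amenable.

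The argument is essentially a chaining of previously established results, so I do not anticipate a genuine analytic obstacle. The only steps requiring care are bookkeeping in nature: verifying that the approximate indicator survives the passage from $A_{cb}(G\times G)$ to $M_{cb}A(G\times G)$ with its bound and both defining conditions intact, and confirming that relative biflatness genuinely degrades to one-sided relative flatness rather than demanding a fresh construction.
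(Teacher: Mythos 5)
Your proof is correct and follows essentially the same route as the paper's: invoke Theorem~\ref{t:OA} $(2)\Rightarrow(3)$ to extract a bounded approximate indicator for $G_\Delta$, pass it through the $cb$-multiplier analogue of \cite[Proposition 2.3]{ARS} to get relative $C$-biflatness (hence relative $C$-injectivity of $\LG$ in $A(G)-\mathbf{mod}$), and contradict Corollary~\ref{c:F_2}. Your extra bookkeeping—checking that the indicator survives the inclusion $A_{cb}(G\times G)\hookrightarrow M_{cb}A(G\times G)$ and that biflatness degrades to one-sided flatness—is exactly what the paper leaves implicit, and it holds without issue.
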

\begin{pf}
If $A_{cb}(G)$ were operator amenable then an approximate
indicator for $G_{\Delta}$ would exist, implying that $VN(G)$
is relatively $C$-injective in $A(G)-\mathbf{mod}$ for some $C\geq 1$ by
the completely bounded multiplier analogue of \cite[Proposition 2.3]{ARS}, in contradiction
to Corollary \ref{c:F_2}.
\qed\end{pf}

Any weakly amenable, non-amenable, almost connected group $G$ satisfies
the hypotheses of Corollary \ref{c:Acbopamen} by \cite{Pat3} and \cite[Theorem 5.5]{Rick}. For example, $SL(2,\R)$, $SL(2,\C)$, and $SO(1,n)$, $n\geq 2$. Since weak amenability is preserved under compact extensions \cite[Proposition 1.3]{CH}
and almost connected groups have injective group
von Neumann algebras, if $K$ is any compact group
with an action of $G$ by automorphisms, then $K\rtimes G$ is weakly
amenable and $A_{cb}(K\rtimes G)$ fails to be operator
amenable.

%% The Appendices part is started with the command \appendix;
%% appendix sections are then done as normal sections
%% \appendix

%% \section{}
%% \label{}

%% If you have bibdatabase file and want bibtex to generate the
%% bibitems, please use
%%
%%  \bibliographystyle{elsarticle-num}
%%  \bibliography{<your bibdatabase>}

%% else use the following coding to input the bibitems directly in the
%% TeX file.

\section*{Acknowledgements}

This work contains results from the doctoral thesis of the first author, who would like to thank Matthias Neufang for helpful discussions, and was partially supported by an NSERC Canada Graduate Scholarship. We also thank Yemon Choi for helpful comments which improved the presentation of the paper. A portion of this project was completed at the Fields Institute during the Thematic Program on Abstract Harmonic Analysis, Banach and Operator Algebras in 2014, as well as the retrospective meeting in 2015. We are grateful to the Institute for its kind hospitality.

\end{document}